\newcommand{\mfg}{\mathfrak{g}}
\newcommand{\mfu}{\mathfrak{u}}
\newcommand{\mfe}{\mathfrak{e}}
\newcommand{\mfS}{\mathfrak{S}}
\newcommand{\mbE}{\mathbb{E}}
\newcommand{\NN}{\mathbb{N}}
\newcommand{\msW}{\mathscr{W}}
\newcommand{\mrlie}{\mathrm{Lie}}
\newcommand{\mrmax}{\mathrm{Max}}
\newcommand{\mrcom}{\mathrm{Com}}
\newcommand{\mrlt}{\mathrm{LT}}
\newcommand{\mrstab}{\mathrm{Stab}}
\newcommand{\mrrad}{\mathrm{rad}}
\newcommand{\mrsp}{\mathrm{Span}}
\newcommand{\mmax}{\mathrm{max}}
\newcommand{\Phir}{\Phi^{\mrrad}}
\newcommand{\Bk}{\Bbbk}
\newcommand{\msrke}{\mathsf{rk}_p}
\newcommand{\set}[1]{\left\{#1\right\}}
\def\exp{\operatorname{exp}\nolimits}
\def\ad{\operatorname{ad}\nolimits}
\def\odd{\operatorname{odd}\nolimits}
\def\ev{\operatorname{ev}\nolimits}
\def\Gr{\operatorname{Gr}\nolimits}
\numberwithin{equation}{section}
\newtheorem{Theorem}{Theorem}[section]
\theoremstyle{Theorem}
\newtheorem{Thm}{Theorem}[subsection]
\newtheorem{Lem}[Thm]{Lemma}
\newtheorem{Cor}[Thm]{Corollary}
\newtheorem*{thm*}{Theorem}
\newtheorem*{thm**}{Corollary}
\newtheorem*{thm***}{Theorem B} 
\theoremstyle{remark}
\newtheorem*{Remark}{Remark}
\newtheorem*{Remarks}{Remarks}
\newtheorem*{Definition}{Definition}
\newtheorem*{Notation}{Notation}
\numberwithin{equation}{section}
\begin{document}

\title{Varieties of elementary subalgebras of submaximal rank in type A}
\author[Yang Pan]{Yang Pan}
\address{School of Sciences, Zhejiang A\&F University, 311300 Hangzhou, Zhejiang, China}
\address{Mathematisches Seminar, Christian-Albrechts-Universit\"at zu Kiel, Ludewig-Meyn-Str.4, 24098 Kiel, Germany}
\email{ypan@outlook.de}
\subjclass[2000]{17B50, 16G10}
\date{\today}

\begin{abstract} 
Let $G$ be a connected simple algebraic group over an algebraically closed field 
$\Bk$ of characteristic $p>0$, and $\mfg:=\mrlie(G)$. We additionally assume that 
$G$ is standard and is of type $A_{n}$.  Motivated by the investigation of the geometric properties of the
varieties $\mbE(r,\mfg)$ of $r$-dimensional elementary subalgebras of a restricted Lie algebra $\mfg$, 
we will show in this article the irreducible components of $\mbE(\msrke(\mfg)-1,\mfg)$ when
$\msrke(\mfg)$  is the maximal dimension of an elementary subalgebra of $\mfg$.
\end{abstract}

\maketitle

\section*{Introduction}
Let $(\mfg,[p])$ be a finite dimensional restricted Lie algebra over an algebraically closed field $\Bk$ 
of positive characteristic $p>0$.  The closed subset of $p$-restricted nilpotent elements 
\[  V(\mfg) := \{ x\in \mfg \ ; \ x^{[p]}= 0 \}  \]
has been studied in the modular representation theory of $(\mfg, [p])$, 
aiming to understand the cohomological support variety.
A Lie subalgebra $\mfe \subset \mfg$ is said to be elementary if it is abelian and has trivial $p$-restriction.   
The subset 
\[  \mbE(r,\mfg)  : = \{   \mfe \in \Gr_r(\mfg)   \ ; \   [\mfe, \mfe]=0,  \, \mfe \in V(\mfg)  \}       \]
of the Grassmannian $\Gr_r(\mfg)$ of $r$-planes in $\mfg$ which consists of $r$-dimensional elementary subalgebras of $\mfg$
has been expounded in \cite{CFP} by Carlson, Friedlander and Pevtsova.
For instance, they show us $\mbE(r,\mfg)$ can be endowed with  a projective variety structure and it
affords geometric invariant for the representations of $\mfg$.

When concerning the geometric properties of $\mbE(r, \mfg)$,  interest has been shown in determining
its irreducible components.  A prototypical example arises from $\mbE(1,\mfg)$, which is the projectivization 
of the restricted nullcone $V(\mfg)$.  When $\mfg$ is the Lie algebra of a simple algebraic group, 
$V(\mfg)$ is irreducible regardless of the characteristic $p$, so is $\mbE(1, \mfg)$. 
When $r$ equals 2, Premet in \cite{Pre1} shows the correspondence between the irreducible components of the nilpotent
commuting variety $\mathcal{C}^{nil}(\mfg)$ and the distinguished nilpotent orbits of $\mfg$ when $\mfg$ is a 
reductive Lie algebra. It follows that $\mathcal{C}^{nil}(\mfg)$ is irreducible when $\mfg$ is of 
type $A_{n}$,  in which case the same is true of $\mbE(2,\mfg)$ if $p\geq n+1$.  Let
\[ \msrke(\mfg) := \max \{ r \in \NN_0 \ ; \ \mbE(r,\mfg) \ne \emptyset\} \]
be the $p$-rank of $\mfg$. This rank of the restricted Lie algebra of a simple algebraic group was determined 
earlier in \cite{CFP} and in recent work by Pevtsova-Stark in \cite{PS}.  Irreducible components
of $\mbE(\msrke(\mfg), \mfg)$ for these Lie algebras were calculated case by case and were shown in  
\cite[Table 4]{PS}.

It is the purpose of this article to give a description of the variety $\mbE(\msrke(\mfg)-1,\mfg)$ when
$\mfg$ is the Lie algebra of a connected standard simple algebraic group $G$ of type $A$. 
Under the standard assumption one can show that $\mfg$ is a Lie algebra isomorphic to $\mathfrak{sl}_{n}(\Bk)$
such that $p$ does not divide $n$. In view of \cite[Lemma 2.2]{Pre2}, 
the determination of $\mbE(\msrke(\mfg)-1,\mfg)$ can be reduced to the unipotent case 
$\mbE(\msrke(\mfg)-1,\mfu)$ 
where $\mfu=\mrlie(R_{u}(B))$ and $R_{u}(B)\subset B \subset G$ is 
the unipotent radical of a fixed Borel subgroup $B$ of $G$. 
Let $\Phi$ be an irreducible root system with positive roots $\Phi^+$.
Recall from \cite{PS} that two positive roots commute if their sum is not a root.
We define the set
\[ 
   \begin{gathered} \mrmax_{r}(\Phi):=
  \end{gathered}  
  \left\{
  \begin{gathered} R \subset \Phi^+  \ ; \ \alpha+\beta \notin \Phi^+ , \forall  \alpha,\beta\in R, |R| =r \;\mbox{and}\;
   R\not\subset R^{'} \\ 
  \mbox{where}\; R^{'} \mbox{is any subset of commuting positive roots}
\end{gathered}
\right\}.
\]
When $r=\msrke(\mfg)$, we write $\mrmax_{r}(\Phi)$ as $\mrmax(\Phi)$ simply.
By considering the set 
\[  \mrcom_{r}(\Phi):= \{ R \subset \Phi^+  \ ; \ \alpha+\beta \notin \Phi^+ , \forall  \alpha,\beta\in R, |R| =r \}.\]
we find the map $ \xymatrix{ \mrlt:\mbE(\msrke(\mfg),\mfu) \ar@{->}[r] & \mrmax(\Phi) }$ in \cite[(3.1.2)]{PS}
can be defined in a generalized fashion
$ \xymatrix{ \mrlt:\mbE(r,\mfu) \ar@{->}[r] & \mrcom_{r}(\Phi) }$ since 
$\mrcom_{r}(\Phi)=\mrmax(\Phi)$ when $r=\msrke(\mfg)$.
The problem now is for any given total ordering and any element $\mfe$ of $\mbE(\msrke(\mfg)-1,\mfu)$,
it is possible to have $\mrlt(\mfe)\notin \mrmax_{\msrke(\mfg)-1}(\Phi)$.
Let $\mbE(\msrke(\mfg)-1,\mfu)_{\mmax}$ be the subset of $\mbE(\msrke(\mfg)-1,\mfu)$ consisting of 
maximal elementary subalgebras.
This raises the question concerning the ordering on $\Phi^+$,  the one giving rise to the map
\[ \xymatrix{\mrlt: \mbE(\msrke(\mfg)-1,\mfu)_{\mmax}  \ar@{->}[r] & \mrmax_{\msrke(\mfg)-1}(\Phi) }.  \]
We find, for type $A_{n}$ the ordering exists for n sufficiently large.

Since the set $\mrmax_{\msrke(\mfg)-1}(\Phi)$  is tractable, it will be determined within the initial step.
The calculation of $\mbE(\msrke(\mfg)-1,\mfu)$ then proceeds via three steps.  First, we determine 
$\mbE(\msrke(\mfg)-1,\mfu)_{\mmax}$ as a set by using the map $\mrlt$. Second,
we prove that the elements of $\mbE(\msrke(\mfg)-1,\mfu)_{\mmax}$  are given by the combinatorics of the
root system of $G$, which largely relies on Malcev's linear algebraic approach. Finally, we have to utilize
the result on $\mbE(\msrke(\mfg),\mfu)$ to understand the elements of $\mbE(\msrke(\mfg)-1,\mfu)$
which are not in $\mbE(\msrke(\mfg)-1,\mfu)_{\mmax}$.
The main result of this paper is:

\bigskip
\begin{thm*}
Let $G$ be a standard simple algebraic $\Bk$-group with root system $\Phi$ of type $A_{n}\;(n\geq 5)$ and 
$\mfg:=\mrlie(G)$. Then the irreducible components of $\mbE(\msrke(\mfg)-1,\mfg)$ can be characterized as follows:
\end{thm*}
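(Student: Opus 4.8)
The plan is to prove the theorem by first passing to the unipotent algebra and then running the three-step program sketched above. Write $r:=\msrke(\mfg)$ and recall, via \cite[Lemma 2.2]{Pre2}, that the study of $\mbE(r-1,\mfg)$ reduces to that of $\mbE(r-1,\mfu)$ under the adjoint $G$-action, with the irreducible components corresponding by $G$-saturation; so I would concentrate all of the work on $\mbE(r-1,\mfu)$, where $\mfu=\mrlie(R_u(B))$. Since $\mfu$ is nilpotent and $\mfg\cong\mfsl_n(\Bk)$ with $p\nmid n$, under the standard hypothesis the condition $x^{[p]}=0$ holds automatically on $\mfu$, so that $\mbE(r-1,\mfu)$ is exactly the variety of $(r-1)$-dimensional abelian subalgebras of $\mfu$. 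Fixing a suitable total ordering on $\Phi^+$, I would then set up the leading-term map $\mrlt\colon\mbE(r-1,\mfu)\to\mrcom_{r-1}(\Phi)$ and stratify the source by its fibres, reducing irreducibility to the analysis of each stratum and its closure.

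The first main step is to pin down $\mbE(r-1,\mfu)_{\mmax}$ as a set through $\mrlt$. The key point is that, for the right ordering, $\mrlt$ carries $\mbE(r-1,\mfu)_{\mmax}$ into the combinatorial set $\mrmax_{r-1}(\Phi)$ rather than merely into $\mrcom_{r-1}(\Phi)$; I would verify that such an ordering exists for $A_n$ with $n\geq 5$ (the numerical input recorded in the introduction) and then determine $\mrmax_{r-1}(\Phi)$ explicitly. Concretely, $r=\lfloor n^2/4\rfloor$ is realised by the block pattern of the Schur--Jacobson maximal abelian subalgebra, and $\mrmax_{r-1}(\Phi)$ is obtained by classifying the maximal commuting subsets of $\Phi^+$ of size $r-1$: a finite problem solvable by inspecting how an $(a\times b)$-block with $a+b=n$, $ab=r$, can be shrunk by one root while remaining maximal.

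The second step upgrades this set-theoretic description to a geometric one. For each type $R\in\mrmax_{r-1}(\Phi)$ I would show, using Malcev's linear-algebra analysis of abelian subalgebras of $\mfu$, that the fibre $\mrlt^{-1}(R)$ is an irreducible fibre bundle (in the good cases an affine-space bundle) over a base determined by $R$, so that its closure is an irreducible subvariety; these closures are the candidate components arising from genuinely maximal subalgebras. The third step treats $\mbE(r-1,\mfu)\setminus\mbE(r-1,\mfu)_{\mmax}$: each such $\mfe$ lies in some top-rank $\mfe'\in\mbE(r,\mfu)$, so this locus is the image, under the first projection, of the incidence variety $\{(\mfe,\mfe') : \mfe\subset\mfe',\ \dim\mfe'=r\}$, which is a $\mbP^{r-1}$-bundle over $\mbE(r,\mfu)$. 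Since the components of $\mbE(r,\mfu)$ are already known from \cite[Table 4]{PS}, this image decomposes into irreducible pieces indexed by those components. Assembling the two families and discarding any piece contained in another then yields the list of irreducible components, which I would match against the stated characterization before applying $\Ad(G)$-saturation to return to $\mbE(r-1,\mfg)$.

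The main obstacle is twofold. First is the existence and construction of the total ordering on $\Phi^+$ forcing $\mrlt$ into $\mrmax_{r-1}(\Phi)$ on the maximal locus: without it the leading-term stratification is incompatible with maximality, and this is precisely where the hypothesis $n\geq 5$ is needed. Second, and more serious, is the closure and containment analysis required to decide which candidate closures are genuine irreducible components: one must compare the dimensions of the ``maximal submaximal'' families of Step 2 with those of the hyperplane families of Step 3 and rule out spurious inclusions, since \emph{a priori} a maximal $(r-1)$-dimensional subalgebra could lie in the closure of the incidence image. Controlling these dimensions and intersection patterns, rather than the combinatorial enumeration itself, is where the bulk of the technical effort will go.
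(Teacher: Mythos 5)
Your overall architecture coincides with the paper's: reduction to $\mbE(\msrke(\mfg)-1,\mfu)$ via \cite[Lemma 2.2]{Pre2}, determination of $\mrmax_{\msrke(\mfg)-1}(\Phi)$, a leading-term map adapted to a carefully chosen ordering, treatment of the non-maximal locus through containment in top-rank elementary subalgebras classified in \cite{PS}, and a final maximality check. But two of your steps, as described, would not go through. First, your Step 2 predicts that $\mrlt^{-1}(R)$ for $R\in\mrmax_{\msrke(\mfg)-1}(\Phi)$ is a positive-dimensional affine-space bundle whose closure must then be identified. What the paper actually proves (Theorems \ref{con-A-2m+1} and \ref{con-A-2m}, by lengthy reduced-echelon-form bracket computations) is a rigidity statement: every $\mfe$ with $\mrlt(\mfe)\in\mrmax_{\msrke(\mfg)-1}(\Phi)$ equals $\mrlie(R)$ or $\mrlie(R)^{\exp(\ad(ax_\alpha))}$ for a single root vector $x_\alpha$, i.e.\ the fibre is one point up to a one-parameter conjugation. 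This rigidity is not a technical refinement but the mechanism that makes each candidate piece a single $G$-orbit of an ideal-type subalgebra, hence closed by \cite[Theorem 4.9]{PS}; without it you have no handle on the closures of your strata. (A side error: $x^{[p]}=0$ is \emph{not} automatic on $\mfu$ for small $p$ --- a regular nilpotent element of $\mfu$ has $x^{[p]}\neq 0$ when $p\le n$ --- so $\mbE(r,\mfu)$ is not simply the variety of abelian subalgebras; the paper's computations only use commutativity, but your blanket identification is false. Also $\msrke(\mfg)=\lfloor (n+1)^2/4\rfloor$ for type $A_n$, not $\lfloor n^2/4\rfloor$.)

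Second, and more seriously, your final step proposes to rule out spurious containments by comparing dimensions, and this fails exactly where the work is. In type $A_{2m+1}$ the closed orbit $G.\mrlie(\Phir_{m})$ is a partial flag variety of dimension $m(m+2)$, strictly smaller than $\dim G.\mbE(\msrke(\mfg)-1,\mrlie(\Phir_{m+1}))$, so no dimension count excludes the containment $G.\mrlie(\Phir_{m})\subseteq G.\mbE(\msrke(\mfg)-1,\mrlie(\Phir_{m+1}))$; in type $A_{2m}$ the two candidate components are exchanged by the graph automorphism and have equal dimension, so dimensions say nothing about mutual containment. The paper's substitute is Lemma \ref{W-con}: via the Bruhat decomposition and the fact that the relevant root sets are ideals (so $B$ stabilizes $\mrlie(R)$), any $G$-conjugacy $g.\mfe=\mrlie(R)$ forces $\mrlt(\mfe)$ and $R$ to be $\msW$-conjugate. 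This converts each putative containment into a combinatorial statement about the symmetric group $\mfS_{2m+1}$ acting on subsets of $\Phi^+$, which is then refuted explicitly (the $A_{2m}$ case requires a genuine argument about where $w$ can send the columns $\set{\epsilon_i-\epsilon_{j_0}}$). You would need to add this conjugacy lemma, or an equivalent device, for your last step to close.
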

\def\arraystretch{2}
\begin{table}[ht]
\centering
\begin{tabular}[b]{|c|c|c|c|}
\hline
Type  & \parbox{60pt}{\vspace{8pt}\centering Restrictions on rank\vspace{8pt}} & Irreducible components  \\
\hline \hline
$A_{2m+1}$  & $m\geq 2$ & $G.\mrlie(\Phir_{m})$, $G.\mrlie(\Phir_{m+2})$, $G.\mbE(\msrke(\mfg)-1,\mrlie(\Phir_{m+1}))$\\
\hline
$A_{2m}$   & $m\geq 3$ & $G.\mbE(\msrke(\mfg)-1,\mrlie(\Phir_{m}))$, $G.\mbE(\msrke(\mfg)-1,\mrlie(\Phir_{m+1}))$ \\
\hline
\end{tabular}
\caption{Characterization}
\end{table}
\def\arraystretch{1}
\bigskip
\noindent

\begin{Remarks}
(1). In \cite{PS} the authors have shown that $\mbE(\msrke(\mfg),\mfg)$ is a finite disjoint union of partial flag
varieties unless $G$ is of type $A_{2}$, which differs from the above result.\\
(2). We list the results of $\mbE(\msrke(\mfg)-1,\mfg)$ for $A_{n}\;(n\leq 4)$ in the following. The reference we give is the
paper \cite{War} of Warner, in which the author discuss the irreducibility of $\mbE(r,\mathfrak{gl}_{n})$ in section 5.

\def\arraystretch{2}
\begin{table}[ht]
\centering
\begin{tabular}[b]{|c|c|c|c|}
\hline
Type  & $\msrke(\mfg)$ & $\mbE(\msrke(\mfg)-1,\mfg)$  \\
\hline \hline
$A_{2}$  & $2$ & irreducible \\
\hline
$A_{3}$   & $4$ & irreducible  \\
\hline
$A_{4}$   & $6$ & unknown  \\
\hline
\end{tabular}
\caption{small rank cases}
\end{table}
\def\arraystretch{1}
\end{Remarks}

{\bf Acknowledgement.} 
The results of this paper are part of the author's doctoral thesis, which he was writing
at the University of Kiel. He would like to thank his advisor, Rolf Farnsteiner, for his continuous support. 
Furthermore, he thanks the members of his working group for proofreading the paper and the referee for his useful  comments.

\section{Preliminaries}
\subsection{Parabolic system}{\label{para-sys}}
We assume that $G$ is a simple algebraic $\Bk$-group with irreducible root system $\Phi$. The interested reader
may consult \cite{Bor}\cite{Bou}\cite{Car}\cite{Spr} for the theory of algebraic groups.
Let $U_{\alpha}$ be the root subgroup corresponding to a root $\alpha$, and $B=\langle U_{\alpha},T \ ; \
\alpha \in \Phi^+\rangle $ be a Borel subgroup of $G$ containing $T$. Initially, we study
the Weyl group $\msW$ in tandem with an irreducible root system $\Phi$. 
Let $\Delta:=\set{\alpha_{1},\ldots,\alpha_{n}}$ be the set of positive simple roots, 
and $I$ be a subset of $\Delta$. We define 
\[   \Phi_{I}:=\Phi\cap\sum_{\alpha\in I}\mathbb{Z}\alpha  \]
to be the parabolic subsystem of roots, and 
\[  \msW_{I}:=\langle s_{\alpha} \ ; \ \alpha\in I \rangle     \]
to be the standard parabolic subgroup of $\msW$\;(see \cite{MT} for details). 
Then subgroups of the form 
$P_{I}:=B\msW_{I}B=\langle T,U_{\alpha}  \ ; \ \alpha\in \Phi^{+}\cup 
\Phi_{I} \rangle$ are called standard parabolic subgroups of $G$. The Levi decomposition 
$P_{I}=L_{I}\ltimes R_{u}(P_{I})$ decomposes $P_{I}$ into a semi-direct product of its Levi factor $L_{I}$ 
and the unipotent radical $R_{u}(P_{I})$, with the latter being generated by root subgroups $\set{U_{\alpha}
\ ; \ \alpha\in \Phi^{+}\setminus\Phi_{I}^{+}}$. Influenced by this, we set $S:=\Delta\setminus I$ and 
then define
\begin{align*}
    \Phi_{S}^{\mrrad}=\Phi^{+}\setminus\Phi_{I}^{+}
\end{align*}                              
to be the set of positive roots that cannot be written as a linear combination 
of the simple roots not in $S$. If $S=\set{\alpha_{i}}$, then we simply write 
$\Phi_{i}^{\mrrad}$ instead of $\Phi_{\set{\alpha_{i}}}^{\mrrad}$.

\subsection{Maximal subsets for type A}
Suppose that $G$ is of type $A_{n}$. The roots of $A_{n}$ are the integer vectors in $\mathbb{R}^{n+1}$ of
length $\sqrt{2}$ for which the coordinates sum to 0. Let $\{\epsilon_{i} \ ; \ 1\leq i\leq n+1\}$ be the standard
basis of $\mathbb{R}^{n+1}$. We denote by 
\[  \Phi=\set{\epsilon_{i}-\epsilon_{j} \ ; \  i\neq j, 1\leq i,j\leq n+1} \]
the corresponding set of roots, and by $\Delta=\set{\alpha_{1},\ldots,\alpha_{n}}$ the base of $\Phi$ where 
$\alpha_{i}=\epsilon_{i}-\epsilon_{i+1}$. There is a bijection $\phi$ from the set of non-trivial proper subsets of
$\set{1,\ldots,n+1}$ to the set of maximal subsets of commuting roots of $\Phi$ by sending $J$ to
$\phi(J):=\set{\epsilon_{i}-\epsilon_{j} \ ; \  i\in J, j\notin J}$, and
the condition $J <\set{1,\ldots,n+1}\setminus J$ on $J$ gives rise to a maximal subset of commuting positive
roots; see \cite[A.1]{PS}.

\begin{Notation}
Type $A_{n}$. 
\begin{align*}
& n=2m+1,& \Phi_{m+1,m+2}^{\mathrm{odd}}:&=\phi(J)\cap \Phi^{+},  \; \mbox{for}\;J=\set{1,\ldots,m,m+2} \\
& n=2m,    & \Phi_{m+1,m+2}^{\mathrm{ev}}:&=\phi(J)\cap \Phi^{+},    \;\mbox{for}\; J=\set{1,\ldots,m,m+2}  \\
&n=2m,     & \Phi_{m,m+1}^{\mathrm{ev}}:&=\phi(J)\cap\Phi^{+} ,         \;\mbox{for}\; J=\set{1,\ldots,m-1,m+1}
\end{align*}
\end{Notation}

\begin{Theorem}
Keep the notations as above and set $\mfg:=\mrlie(G)$. Then the elements of the set $\mrmax_{\msrke(\mfg)-1}(\Phi)$ 
are given as follows:
\end{Theorem}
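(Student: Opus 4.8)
The plan is to translate the root combinatorics into a statement about two‑colourings of $\{1,\dots,n+1\}$ and then run an elementary extremal count. Realise each positive root $\epsilon_i-\epsilon_j$ ($i<j$) as the pair $(i,j)$; by the commuting criterion, $\epsilon_i-\epsilon_j$ and $\epsilon_k-\epsilon_l$ (with $i<j$, $k<l$) fail to commute precisely when $j=k$ or $l=i$. Consequently a set $R\subset\Phi^+$ is commuting if and only if its set of row indices $\{i \ ; \ (i,j)\in R\}$ is disjoint from its set of column indices $\{j \ ; \ (i,j)\in R\}$. First I would prove a maximality lemma: a non‑empty commuting $R$ is maximal if and only if its row and column index sets partition $\{1,\dots,n+1\}$ and $R$ consists of all admissible pairs; equivalently $R=\phi(J)\cap\Phi^+$ with $J$ the set of row indices. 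The forward implication is the observation that any pair $(i,j)$ whose $i$ is not a column index and whose $j$ is not a row index may be adjoined without creating a chain, so maximality forces every index to be used and $R$ to be saturated; the converse is immediate. This recovers, and makes self‑contained, the parametrisation of maximal commuting subsets of positive roots by subsets $J$ from \cite[A.1]{PS}, and reduces the statement to listing those $J$ with $|\phi(J)\cap\Phi^+|=\msrke(\mfg)-1$.

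Next I would make the cardinality explicit. Writing $a:=|J|$, the pairs counted by $\phi(J)\cap\Phi^+$ are the $(i,j)$ with $i\in J$, $j\notin J$, $i<j$, so
\[
|\phi(J)\cap\Phi^+|\;=\;a(n+1-a)-d(J),\qquad
d(J):=\#\{\,i<j \ ; \ i\notin J,\ j\in J\,\},
\]
where $d(J)\ge 0$ is the number of ``inversions'' of the colouring and vanishes exactly for the interval $J=\{1,\dots,a\}$. Since $a(n+1-a)\le\lfloor(n+1)^2/4\rfloor=\msrke(\mfg)$ with the maximum attained only for $a$ as close as possible to $(n+1)/2$, the equation $a(n+1-a)-d(J)=\msrke(\mfg)-1$ forces either $a(n+1-a)=\msrke(\mfg)$ and $d(J)=1$, or $a(n+1-a)=\msrke(\mfg)-1$ and $d(J)=0$. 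The remaining bookkeeping is a split on the parity of $n$: for $n=2m+1$ the balanced value $a=m+1$ gives $a(n+1-a)=\msrke(\mfg)$ while $a\in\{m,m+2\}$ give $a(n+1-a)=\msrke(\mfg)-1$, and for $n=2m$ the two balanced values $a\in\{m,m+1\}$ give $a(n+1-a)=\msrke(\mfg)$ with no $a$ achieving $\msrke(\mfg)-1$ (the relevant quadratic has no integer root).

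The crux is the uniqueness of the inversion‑one colourings. I would prove that for each fixed $a$ there is exactly one subset $J$ of size $a$ with $d(J)=1$, namely $J=\{1,\dots,a-1,a+1\}$; this is the statement that the coefficient of $t^1$ in the Gaussian binomial $\binom{n+1}{a}_t=\sum_J t^{d(J)}$ equals $1$, but it may also be seen directly, since $d(J)=1$ means a single non‑$J$ index immediately precedes a single $J$ index and everything else is sorted. Feeding in the admissible values of $a$ then yields the named sets: for $n=2m+1$ the colourings are $J=\{1,\dots,m\}$ and $J=\{1,\dots,m+2\}$ (both with $d=0$) together with $J=\{1,\dots,m,m+2\}$ (with $d=1$), which are $\Phir_{m}$, $\Phir_{m+2}$ and $\Phi_{m+1,m+2}^{\mathrm{odd}}$; for $n=2m$ the colourings $J=\{1,\dots,m-1,m+1\}$ and $J=\{1,\dots,m,m+2\}$ (each with $d=1$) are $\Phi_{m,m+1}^{\mathrm{ev}}$ and $\Phi_{m+1,m+2}^{\mathrm{ev}}$; here one uses $\Phir_{i}=\phi(\{1,\dots,i\})\cap\Phi^+$, which follows by comparing the definition of $\Phi_{i}^{\mrrad}$ with the $\alpha_i$‑coefficient description of positive roots. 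I expect the main obstacle to be organisational rather than deep: keeping the extremal bound $a(n+1-a)\le\msrke(\mfg)$ sharp enough to discard every $a$ away from the balance point, checking that each inversion‑one subset arising in the admissible range is genuinely maximal (equivalently that $1\in J$ and $n+1\notin J$, so that every index is active, which is where the hypothesis $n\ge 5$ enters), and verifying that the unique configuration for each $a$ is exactly the one recorded in the Notation, with the odd and even cases kept strictly apart.
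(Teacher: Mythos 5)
Your argument is correct and follows essentially the same route as the paper: both parametrize maximal commuting subsets of $\Phi^{+}$ by subsets $J$, split according to whether $\phi(J)$ contains zero or exactly one negative root (your dichotomy $d(J)\in\{0,1\}$ is precisely the paper's Case 1 versus Case 2), and then solve $|J|(n+1-|J|)\in\{\msrke(\mfg)-1,\msrke(\mfg)\}$. Your write-up is simply more self-contained, supplying the maximality criterion ($1\in J$ and $n+1\notin J$) and the uniqueness of the one-inversion subset $J=\{1,\dots,a-1,a+1\}$, both of which the paper leaves implicit.
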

\def\arraystretch{2}
\begin{table}[ht]{\label{sub-max}}
\centering
\begin{tabular}[b]{|c|c|c|c|}
\hline
Type  & \parbox{60pt}{\vspace{8pt}\centering Restrictions on rank\vspace{8pt}} &   $\mrmax_{\msrke(\mfg)-1}(\Phi)$\\ 
\hline \hline
$A_{2m+1}$  & $m\geq 0$ & $\Phir_{m}$, $\Phir_{m+2}$, $\Phi_{m+1,m+2}^{\mathrm{odd}}$\\
\hline
$A_{2m}$   & $m\geq 1$ & $\Phi_{m+1,m+2}^{\mathrm{ev}}$, $\Phi_{m,m+1}^{\mathrm{ev}}$ \\
\hline
\end{tabular}
\caption{Maximal subset: order $\msrke(\mfg)-1$}
\end{table}
\def\arraystretch{1}
\begin{proof}
Let $M(A)\in \mrmax_{\msrke(\mfg)-1}(\Phi)$. Notice that the maximal dimension $\msrke(\mfg)$ 
is $(m+1)^{2}$ (resp. $m(m+1))$ when $n=2m+1$ (resp. $n=2m$). 
If $M(A)$ is still maximal in $\Phi$, then 
$M(A)=\phi(J)$ for certain $J$. By letting $|M(A)|=|\phi(J)|=|J|(n+1-|J|)$ equal $(m+1)^{2}-1$ 
when $n=2m+1$ and equal $m(m+1)-1$ when $n=2m$, 
we get $|J|=m,m+2$ for $n=2m+1$, and there is no solution for $n=2m$. Continuing the consideration, if 
$|J|=m$ or $m+2$ then $M(A)$ has to be $\Phir_{m}$ or $\Phir_{m+2}$ 
respectively. 

Alternatively, $M(A)$ is maximal in $\Phi^{+}$ but not in $\Phi$.
Then $M(A)\subset \phi(J)$ for some $J$ with 
\[ |\phi(J)|=\msrke(\mfg) \; \mbox{and}\; |\phi(J)\cap \Phi^{+}|=\msrke(\mfg)-1.\]
One gets $M(A)$ equals $\Phi_{m+1,m+2}^{\mathrm{odd}}$
when $n=2m+1$, and equals $\Phi_{m+1,m+2}^{\mathrm{ev}}$ or $\Phi_{m,m+1}^{\mathrm{ev}}$ when $n=2m$. 
\end{proof}

\begin{Remark}
We recall the set $\mrmax(\Phi)$ for type $A_{n}$, which is calculated by Malcev in \cite{Mal}:
\def\arraystretch{2}
\begin{table}[ht]{\label{max}}
\centering
\begin{tabular}[b]{|c|c|c|c|}
\hline
{Type} & \parbox{60pt}{\vspace{8pt}\centering Restrictions on rank\vspace{8pt}} &   $\mrmax(\Phi)$\\ 
\hline \hline
$A_{2m+1}$  & $m\geq 0$ & $\Phir_{m+1}$\\
\hline
$A_{2m}$   & $m\geq 1$ & $\Phir_{m+1}$, $\Phir_{m}$ \\
\hline
\end{tabular}
\caption{Maximal subset: order $\msrke(\mfg)$}
\end{table}
\def\arraystretch{1}
\end{Remark}

\section{Main result}
Now we concentrate on $G$ being a standard connected simple algebraic $\Bk$-group of type $A_{n}$ with
$\mfg:=\mrlie(G)$.  Let $\Phi$ be the root system of $G$ with positive roots $\Phi^+$. Since $p$ is a good prime for $G$, we have 
$[x_\alpha, x_\beta]=0$ if and only if $\alpha+\beta \notin \Phi$ for $\alpha,\beta\in \Phi$ and their associated 
root vectors $x_\alpha, x_\beta$. Recall that $x_\alpha^{[p]}=0$ for $\alpha\in \Phi$,  one does have an elementary
subalgebra $\mrlie(R):=\mrsp_{\Bk} \{ x_{\alpha} \ ; \ \alpha\in R\}$ when $R$ is a subset of commuting roots.
Let $\mfu$ be the Lie algebra of the unipotent radical. We will show the map 
\[ \xymatrix{\mrlie: \mrmax_{\msrke(\mfg)-1}(\Phi)\ar@{->}[r] & \mbE(\msrke(\mfg)-1,\mfu)_{\mmax}}
; \;R \mapsto \mrlie(R)\]
is surjective up to conjugacy by $G$. This will be done by employing the map (cf. \cite[(3.1.2)]{PS})
\[   \xymatrix{ \mrlt:\mbE(\msrke(\mfg)-1,\mfu)_{\mmax}  \ar@{->}[r] & \mrmax_{\msrke(\mfg)-1}(\Phi) } \]
according to the chosen total ordering.

\subsection{Total ordering for map $\mrlt$}
\bigskip

Suppose that $G$ is of type $A_{2m+1}$. We fix the total ordering $\succeq$ by letting it be
the reverse lexicographic ordering given by $\alpha_{m+1}\prec\alpha_{1}\prec \alpha_{2}\prec\cdots\prec\alpha_{2m+1}$. 
We first show that the map $\mrlt$ is well-defined under such setting for $A_{2m+1}$.
\begin{Lem}{\label{submax-A-2m+1}}
Suppose that $G$ is of type $A_{2m+1}\;(m\geq1)$. If $\mfe \in\mbE(\msrke(\mfg)-1,\mfu)_{\mmax}$, then 
$\mrlt(\mfe)\in\mrmax_{\msrke(\mfg)-1}(\Phi)$ with respect to $\succeq$.
\end{Lem}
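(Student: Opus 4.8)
The plan is to argue by contradiction, exploiting that for type $A_{2m+1}$ there is a unique commuting subset of $\Phi^+$ of the maximal cardinality $(m+1)^2$, namely $\Phir_{m+1}$ (so $\mrcom_{\msrke(\mfg)}(\Phi)=\mrmax(\Phi)=\{\Phir_{m+1}\}$ by the Remark and the identity $\mrcom_{r}(\Phi)=\mrmax(\Phi)$ at $r=\msrke(\mfg)$). First I would record the standard fact that the leading-term set of an elementary subalgebra is commuting: putting $\mfe$ into echelon form with respect to $\succeq$ one obtains a basis $\{f_\rho=x_\rho+(\mbox{strictly }\succ\mbox{-lower terms}) : \rho\in R\}$ indexed by $R:=\mrlt(\mfe)$, and reading the abelian relations $[f_\rho,f_{\rho'}]=0$ at the top root-level forces $[x_\rho,x_{\rho'}]=0$. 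Hence $R\in\mrcom_{\msrke(\mfg)-1}(\Phi)$ with $|R|=\msrke(\mfg)-1=(m+1)^2-1$.

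Second, I would reduce maximality of $R$ to a single combinatorial alternative. If $R$ were not maximal, then $R\cup\{\gamma\}$ is commuting for some $\gamma\in\Phi^+\setminus R$; since this set has cardinality $(m+1)^2$ it lies in $\mrcom_{\msrke(\mfg)}(\Phi)=\{\Phir_{m+1}\}$, so necessarily $R\cup\{\gamma\}=\Phir_{m+1}$ and $R=\Phir_{m+1}\setminus\{\gamma\}$ with $\gamma\in\Phir_{m+1}$. It therefore suffices to prove that an $\mfe$ with $\mrlt(\mfe)=\Phir_{m+1}\setminus\{\gamma\}$ is never maximal, which contradicts $\mfe\in\mbE(\msrke(\mfg)-1,\mfu)_{\mmax}$ and establishes $\mrlt(\mfe)\in\mrmax_{\msrke(\mfg)-1}(\Phi)$.

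Third, to produce the enlargement I would pass to the matrix model $\mfg\cong\mfsl_{2m+2}(\Bk)$ with root vectors $E_{ij}$, in which $\mrlie(\Phir_{m+1})$ is the top-right $(m+1)\times(m+1)$ block and is a maximal abelian subalgebra (Schur--Malcev). The key point is that $\Phir_{m+1}$ is exactly the set of roots $\epsilon_i-\epsilon_j$ with $i\le m+1<j$, i.e.\ those with positive $\alpha_{m+1}$-coordinate; since the chosen ordering makes $\alpha_{m+1}$ the least simple root, every such root dominates in $\succeq$ the roots with which it could bracket nontrivially. I would use this to show that the abelian relations admit no tail entries outside the block, i.e.\ $\mfe\subseteq\mrlie(\Phir_{m+1})$. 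Granting this, $\dim\mfe=(m+1)^2-1$ and the absence of $\gamma$ among the leading terms give $x_\gamma\notin\mfe$, so $\mfe+\Bk\,x_\gamma=\mrlie(\Phir_{m+1})\in\mbE(\msrke(\mfg),\mfu)$, and $\mfe$ is not maximal, as required.

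The hard part will be the tail analysis in the third step, that is, ruling out lower-order contributions $x_\sigma$ with $\sigma\notin\Phir_{m+1}$ in the echelon basis of $\mfe$. A priori such tails are allowed, and one must show that reading $[f_\rho,f_{\rho'}]=0$ level by level in the reverse-lexicographic order with $\alpha_{m+1}$ minimal forces every one of them to vanish --- or, should a stubborn tail survive, that a commuting extension vector $x_\gamma+(\mbox{tail})$ can still be adjoined. Pinning down that the minimality of $\alpha_{m+1}$ is precisely what triggers the cancellation (and seeing where the argument would degrade for small $m$, explaining the hypothesis $m\ge 1$) is what genuinely separates this ordering from an arbitrary one, and is the crux of the proof.
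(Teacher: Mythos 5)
Your reduction is sound and matches the paper's first move: since $\Phir_{m+1}$ is the unique maximal commuting subset of $\Phi^{+}$ for type $A_{2m+1}$, a non-maximal $\mrlt(\mfe)$ of cardinality $\msrke(\mfg)-1$ forces $\mrlt(\mfe)\varsubsetneq\Phir_{m+1}$, and it then suffices to show $\mfe\subseteq\mrlie(\Phir_{m+1})$ to contradict maximality. But you stop exactly there: the containment $\mfe\subseteq\mrlie(\Phir_{m+1})$ is the decisive claim, and you defer it as ``the hard part,'' proposing to extract it from a level-by-level analysis of the relations $[f_{\rho},f_{\rho'}]=0$. That step is never carried out, so as written the proof has a genuine gap at its crux.

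Moreover, the tool you reach for is the wrong one, and the difficulty you anticipate is not there. The ordering $\succeq$ was chosen precisely so that $\Phi^{+}\setminus\Phir_{m+1}\succ\Phir_{m+1}$, i.e.\ $\Phir_{m+1}$ is an initial segment (a down-set) of $\succeq$. Hence if a reduced echelon basis vector has its leading term in $\Phir_{m+1}$, every tail term, being strictly $\prec$ that leading term, is automatically a root of $\Phir_{m+1}$ as well --- a root outside $\Phir_{m+1}$ would dominate the leading term and could not occur as a tail. No bracket relation is needed anywhere in this lemma; the containment $\mfe\subseteq\mrlie(\Phir_{m+1})$ is purely order-theoretic, and then $\dim\mfe<\dim\mrlie(\Phir_{m+1})$ finishes the argument. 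Note also that your heuristic is stated backwards: a root of $\Phir_{m+1}$ does not dominate the roots it brackets nontrivially with; it is dominated by them (they all lie outside $\Phir_{m+1}$), and that is exactly why tails cannot escape the block. The genuinely computational tail analyses you describe belong to the later results (Lemma \ref{submax-A-2m} and Theorems \ref{con-A-2m+1}, \ref{con-A-2m}), not here, and the hypothesis $m\geq 1$ plays no role in this particular argument.
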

\begin{proof}
Assume that $\mrlt(\mfe)\notin \mrmax_{\msrke(\mfg)-1}(\Phi)$, then $\mrlt(\mfe)\varsubsetneq\Phir_{m+1}$
by Table \ref{max}.  Since $\Phi^{+}\setminus\Phir_{m+1}\succ \Phir_{m+1}$,  it implies that all terms of basis 
vectors correspond to the roots lying in $\Phir_{m+1}$. As a result, $\mfe$ is contained in the elementary subalgebra
$\mrlie(\Phir_{m+1})$. Notice that  $\dim \mfe < \dim \mrlie(\Phir_{m+1})$, the containment is proper which contradicts
maximality.
\end{proof}

Now we consider the $\Bk$-group $G$ which is of type $A_{2m}$. We choose the total ordering  $\succeq$ to be the 
reverse lexicographic ordering given by $\alpha_{m+1}\prec\alpha_{m}\prec\alpha_{1}\prec\alpha_{2}\prec\cdots\prec\alpha_{2m}$. 
According to this choice, one can easily check that
\begin{align*}
 \Phi^{+}\setminus(\Phi_{m}^{\mrrad}\cup\Phi_{m+1}^{\mrrad}) \succ\Phi_{m}^{\mrrad}\setminus\Phi_{m+1}^{\mrrad}
 \succ \Phi_{m+1}^{\mrrad}\setminus\Phi_{m}^{\mrrad}\succ\Phi_{m}^{\mrrad}\cap\Phi_{m+1}^{\mrrad}.
\end{align*}

\begin{Lem}{\label{submax-A-2m}}
Suppose that $G$ is of type $A_{2m}$ with $m\geq 3$. If $\mfe\in\mbE(\msrke(\mfg)-1,\mfu)_{\mmax}$, then 
$\mrlt(\mfe)\in \mrmax_{\msrke(\mfg)-1}(\Phi)$ with respect to $\succeq$.
\end{Lem}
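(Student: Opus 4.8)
The plan is to adapt the contradiction argument of Lemma~\ref{submax-A-2m+1}, the new feature being that Table~\ref{max} now lists \emph{two} maximal subsets $\Phir_m$ and $\Phir_{m+1}$, so the single containment used in the odd case splits into two. Suppose $\mrlt(\mfe)\notin\mrmax_{\msrke(\mfg)-1}(\Phi)$. As $\mrlt(\mfe)$ is a set of $\msrke(\mfg)-1$ pairwise commuting positive roots, it is then properly contained in a maximal such set; since $\msrke(\mfg)$ is the maximal cardinality, that set has cardinality $\msrke(\mfg)$ and hence equals $\Phir_m$ or $\Phir_{m+1}$. If $\mrlt(\mfe)\varsubsetneq\Phir_{m+1}$, then $\Phir_{m+1}=(\Phi_{m+1}^{\mrrad}\setminus\Phi_m^{\mrrad})\cup(\Phi_m^{\mrrad}\cap\Phi_{m+1}^{\mrrad})$ is exactly the bottom two blocks of the displayed chain, so every root outside $\Phir_{m+1}$ is $\succ$ every root inside it. Exactly as in Lemma~\ref{submax-A-2m+1}, a basis vector with leading term in $\Phir_{m+1}$ can have no term outside it, whence $\mfe\subset\mrlie(\Phir_{m+1})$ properly, contradicting maximality.

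The genuine obstacle is the remaining case $\mrlt(\mfe)\varsubsetneq\Phir_m$. Here $\Phir_m=(\Phi_m^{\mrrad}\setminus\Phi_{m+1}^{\mrrad})\cup(\Phi_m^{\mrrad}\cap\Phi_{m+1}^{\mrrad})$ is the \emph{second and fourth} blocks of the chain, while the third block $\Phi_{m+1}^{\mrrad}\setminus\Phi_m^{\mrrad}$ sits strictly between them. A vector with leading term in the second block may therefore carry nonzero third-block components, so the naive leading-term argument no longer forces $\mfe\subset\mrlie(\Phir_m)$. To overcome this I would fix an echelon basis of $\mfe$ and bring in the abelian condition. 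Realizing $\mfg\cong\mfsl_{2m+1}$ with root vectors $e_{ij}$, the second block is $\{e_{i,m+1}:i\le m\}$, the third is $\{e_{m+1,j}:j\ge m+2\}$, and the fourth is $\{e_{ij}:i\le m,\ j\ge m+2\}$; a direct check shows the only brackets among these three blocks that land in the fourth block are $[e_{i,m+1},e_{m+1,j}]=e_{ij}$, pairing the second with the third.

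Consequently, for two echelon basis vectors $u_\alpha,u_\beta$ with distinct second-block pivots $e_{i_\alpha,m+1}\ne e_{i_\beta,m+1}$, the coefficient of $e_{i_\alpha,j}$ in the fourth-block part of $[u_\alpha,u_\beta]$ is, after echelon normalization of the pivots, precisely the third-block coefficient $c_{\beta j}$ of $u_\beta$; since $\mfe$ is abelian this vanishes for all $j$, and symmetrically every third-block coefficient of $u_\alpha$ vanishes. Thus, provided at least two pivots lie in the second block, all third-block components die and $\mfe\subset\mrlie(\Phir_m)$ properly, the desired contradiction. The count then closes the argument and explains the hypothesis: the fourth block has only $m^2$ roots while $|\mrlt(\mfe)|=m(m+1)-1$, so at least $m(m+1)-1-m^2=m-1$ pivots must lie in the second block, and $m-1\ge 2$ exactly when $m\ge 3$. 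I expect this second case—reconciling the non-monotone position of $\Phir_m$ in the ordering via the commuting relation $[e_{i,m+1},e_{m+1,j}]=e_{ij}$ together with the pivot count—to be the main point, the first case being a routine repetition of the odd-rank lemma.
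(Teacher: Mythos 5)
Your proposal is correct and follows essentially the same route as the paper: reduce via Table 2 to $\mrlt(\mfe)\varsubsetneq\Phir_{m+1}$ or $\mrlt(\mfe)\varsubsetneq\Phir_{m}$, dispose of the first case by the ordering alone, and in the second case kill the $\Phi_{m+1}^{\mrrad}\setminus\Phi_{m}^{\mrrad}$ coefficients by bracketing two echelon basis vectors with distinct pivots $e_{i,m+1}$. The only difference is organizational: the paper splits your second case according to whether the missing root of $\Phir_{m}$ lies in $\Phi_{m}^{\mrrad}\cap\Phi_{m+1}^{\mrrad}$ or in $\Phi_{m}^{\mrrad}\setminus\Phi_{m+1}^{\mrrad}$, whereas your pivot count (at least $m-1\geq 2$ pivots in the second block) treats both subcases at once and makes the role of the hypothesis $m\geq 3$ explicit.
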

\begin{proof}
If $\mrlt(\mfe)\notin \mrmax_{\msrke(\mfg)-1}(\Phi)$, then either 
$\mrlt(\mfe)\varsubsetneq \Phi_{m}^{\mrrad}$, or $\mrlt(\mfe)\varsubsetneq \Phi_{m+1}^{\mrrad}$ 
according to Table \ref{max}. \\
$\bullet$\;Case 1. $\mrlt(\mfe)\varsubsetneq \Phi_{m+1}^{\mrrad}$. 
Then $\Phi_{m+1}^{\mrrad}\setminus \mrlt(\mfe)=\set{\epsilon_{u}-\epsilon_{v}}$ for some $(u,v)$. 
Notice that  $\Phi^{+}\setminus\Phi_{m+1}^{\mrrad}\succ \Phi_{m+1}^{\mrrad}$, thus the reduced echelon 
form basis of $\mfe$ is as follows
\begin{align*}
     x_{\epsilon_{i}-\epsilon_{j}}+ a_{ij}x_{\epsilon_{u}-\epsilon_{v}},\; a_{ij}=0\;\text{if}\;i<u\;\text{or}\;i=u,j>v
\end{align*}
for $1\leq i\leq m+1,m+2\leq j\leq 2m+1$ and $(i,j)\neq (u,v)$. Then it is readily seen that $\mfe\varsubsetneq \mfe\oplus \Bk x_{\epsilon_{u}-\epsilon_{v}}$, and the maximality of 
$\mfe$ leads
to a contradiction. \\
$\bullet$ Case 2. 
$\Phi_{m}^{\mrrad}\setminus \mrlt(\mfe)=\set{\epsilon_{u}-\epsilon_{v}}\subset
\Phi_{m}^{\mrrad}\cap\Phi_{m+1}^{\mrrad}$. 
Then the reduced basis of $\mfe$ consists of elements for $1\leq i\leq m, m+2\leq j\leq 2m+1$ and $(i,j)\neq (u,v)$
\begin{align*}
& x_{ij }=x_{\epsilon_{i}-\epsilon_{j}}+ a_{ij}x_{\epsilon_{u}-\epsilon_{v}},\;a_{ij}=0\;\mbox{if}\;
         i<u\;\emph{or}\;i=u,j>v \\
&y_{i}=x_{\epsilon_{i}-\epsilon_{m+1}}+\sum_{s=m+2}^{2m+1}b_{is}x_{\epsilon_{m+1}-\epsilon_{s}}
       + d_{i}x_{\epsilon_{u}-\epsilon_{v}} .    
\end{align*}       
Now we compute
\begin{align*}
   [y_{i},y_{i^{'}}]=\sum_{s=m+2}^{2m+1} b_{i^{'}s}N_{\epsilon_{i}-\epsilon_{m+1},
  \epsilon_{m+1}-\epsilon_{s}}x_{\epsilon_{i}-\epsilon_{s}}+\sum_{s=m+2}^{2m+1} 
  b_{is}N_{\epsilon_{m+1}-\epsilon_{s},\epsilon_{i^{'}}-\epsilon_{m+1}}x_{\epsilon_{i^{'}}-\epsilon_{s}}.
\end{align*}
As $m\geq 3$, we may take $i\neq i^{'}$, this gives $b_{is}=0$ for all $i$ and $s$. As a result, we will have $\mfe\varsubsetneq \mfe\oplus \Bk x_{\epsilon_{u}-\epsilon_{v}}$, a contradiction. \\
$\bullet$\;Case 3.
$\Phi_{m}^{\mrrad}\setminus \mrlt(\mfe)=\set{\epsilon_{u}-\epsilon_{m+1}}\subseteq \Phi_{m}^{\mrrad}\setminus \Phi_{m+1}^{\mrrad}$. Then the reduced echelon form basis of $\mfe$ is
$x_{\epsilon_{i}-\epsilon_{j}}$ for $1\leq i\leq m$ and $m+2\leq j\leq 2m+1$ together with for 
$1\leq i\leq m$ and $i\neq u$
\begin{align*}
   y_{i}=x_{\epsilon_{i}-\epsilon_{m+1}}+q_{i}x_{\epsilon_{u}-\epsilon_{m+1}} +\sum_{s=m+2}^{2m+1}d_{is}x_{\epsilon_{m+1}-\epsilon_{s}},\;\;\;q_{i}=0\;\mbox{if}\;i<u.
\end{align*}
If $i,i^{'}$ are distinct and different from $u$\;(which is possible as $m\geq 3$), 
then the coefficient of $x_{\epsilon_{i^{'}}-\epsilon_{s}}$ in $[y_{i},y_{i^{'}}]$ is
$N_{\epsilon_{m+1}-\epsilon_{s},\epsilon_{i^{'}}-\epsilon_{m+1}}d_{is}$, so $d_{is}=0$ for all $i$  and $s$.
Thus $\mfe\varsubsetneq \mfe\oplus \Bk x_{\epsilon_{u}-\epsilon_{m+1}}$, a contradiction and
we finish the proof.
\end{proof}

\subsection{Surjectivity for map $\mrlie$}
\begin{Thm}{\label{con-A-2m+1}}
Suppose that $G$ is of type $A_{2m+1}$ with $m\geq 2$. If $\mfe\in\mbE(\msrke(\mfg)-1,\mfu)$ satisfies
$\mrlt(\mfe)=\Phir_{m},\Phir_{m+2}$ or $\Phi_{m+1,m+2}^{\mathrm{odd}}$ then
$\mfe=\mrlie(\Phir_{m}), \mrlie(\Phir_{m+2})$ or 
$\mrlie(\Phi_{m+1,m+2}^{\mathrm{odd}})^{\exp(\ad(ax_{\alpha_{m+1}}) )}$ for some $a$ respectively.
\end{Thm}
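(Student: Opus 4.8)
The plan is to work with the reduced echelon basis attached to $\mrlt(\mfe)=R$. For each leading root $\gamma\in R$ there is a unique basis vector $v_\gamma=x_\gamma+\sum_{\delta\prec\gamma,\ \delta\notin R}c_{\gamma\delta}\,x_\delta$, so the coefficients $c_{\gamma\delta}$ are the only unknowns. Since $p$ is good we have $[x_\alpha,x_\beta]=N_{\alpha,\beta}x_{\alpha+\beta}$ with $N_{\alpha,\beta}=0$ unless $\alpha+\beta\in\Phi$; imposing $[v_\gamma,v_{\gamma'}]=0$ for all $\gamma,\gamma'\in R$ (the leading bracket $[x_\gamma,x_{\gamma'}]$ vanishes because $R$ is a set of commuting roots) yields, root by root, a linear system in the $c_{\gamma\delta}$. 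First I would pin down, using the ordering $\succeq$, exactly which $\delta\notin R$ can occur below a given $\gamma$: with $\alpha_{m+1}\prec\alpha_1\prec\cdots$ the roots containing $\alpha_{m+1}$ are the smallest, so each admissible perturbation $\delta\prec\gamma$ is forced into a short, explicit list.

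For Cases 1 and 2 I would prove rigidity. Here $R=\Phir_m$ (resp. $\Phir_{m+2}$) equals $\phi(\{1,\dots,m\})$ (resp. $\phi(\{1,\dots,m+2\})$) and lies entirely in $\Phi^+$, so $\mrlie(R)$ is the abelian nilradical of a maximal parabolic and is already elementary of dimension $\msrke(\mfg)-1$. Writing out the echelon basis and expanding $[v_\gamma,v_{\gamma'}]$, I would read off the coefficient of each root. Because $R$ is the nilradical, whenever $\gamma+\delta$ is again a root it lands back in $R$ in a controlled way, so the bracket relations cascade—much as in the case analysis of Lemma~\ref{submax-A-2m}—and force every $c_{\gamma\delta}=0$. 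Hence $\mfe=\mrlie(\Phir_m)$ (resp. $\mrlie(\Phir_{m+2})$), with no conjugation needed.

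Case 3 is where the one-parameter family appears and is the main obstacle. The set $\Phi_{m+1,m+2}^{\mathrm{odd}}=\phi(J)\cap\Phi^+$ for $J=\{1,\dots,m,m+2\}$ is the positive part of a maximal commuting set whose single missing member is the negative root $\epsilon_{m+2}-\epsilon_{m+1}=-\alpha_{m+1}$. I would first record the effect of conjugation: since $2\alpha_{m+1}+\beta$ is never a root in type $A$, we get $\Ad(\exp(a\,\ad x_{\alpha_{m+1}}))x_\beta=x_\beta+aN_{\alpha_{m+1},\beta}x_{\alpha_{m+1}+\beta}$, so applying this automorphism to $\mrlie(\Phi_{m+1,m+2}^{\mathrm{odd}})$ adds to $x_{\epsilon_i-\epsilon_{m+1}}$ a multiple of $x_{\epsilon_i-\epsilon_{m+2}}$ and to $x_{\epsilon_{m+2}-\epsilon_j}$ a multiple of $x_{\epsilon_{m+1}-\epsilon_j}$. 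As one checks that $\epsilon_i-\epsilon_{m+2}\prec\epsilon_i-\epsilon_{m+1}$ and $\epsilon_{m+1}-\epsilon_j\prec\epsilon_{m+2}-\epsilon_j$, these are genuine lower-order perturbations governed by the single scalar $a$, producing a one-parameter family of elementary subalgebras all with leading set $R$.

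For the converse I would take a general $\mfe$ with $\mrlt(\mfe)=R$ and show its echelon coefficients have exactly this shape. The ordering restricts the admissible $\delta$ to $\epsilon_i-\epsilon_{m+2}$ (below $x_{\epsilon_i-\epsilon_{m+1}}$) and $\epsilon_{m+1}-\epsilon_j$ (below $x_{\epsilon_{m+2}-\epsilon_j}$); the abelian condition, evaluated on pairs of basis vectors of these two types, then couples all these coefficients and forces them to agree with one value $a$ after normalising by the structure constants $N$. The technical heart, and the step I expect to resist a clean treatment, is solving this coupled system of bracket relations and verifying that it has precisely the rank needed so that the solution space is the one-dimensional orbit of $\exp(\ad(a x_{\alpha_{m+1}}))$—no more and no fewer parameters—after which $\mfe=\mrlie(\Phi_{m+1,m+2}^{\mathrm{odd}})^{\exp(\ad(a x_{\alpha_{m+1}}))}$ follows.
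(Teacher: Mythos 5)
Your overall strategy---reduced echelon basis relative to $\succeq$, the commutation relations as a linear system in the sub-leading coefficients, and a single conjugation by $\exp(\ad(a x_{\alpha_{m+1}}))$ to absorb the surviving parameter in the third case---is exactly the route the paper takes, and your description of the forward direction of Case 3 (which lower-order perturbations the conjugation produces, and why they are indeed lower order) is correct. However, there is a concrete error in your treatment of the converse in Case 3: you assert that the ordering restricts the admissible perturbations $\delta\prec\gamma$, $\delta\notin R$, to the two families $\epsilon_i-\epsilon_{m+2}$ (below $x_{\epsilon_i-\epsilon_{m+1}}$) and $\epsilon_{m+1}-\epsilon_j$ (below $x_{\epsilon_{m+2}-\epsilon_j}$). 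This is false. Since every root of $\Phi^{\mrrad}_{m+1}$ precedes every root outside it, a basis vector with leading term $x_{\epsilon_i-\epsilon_{m+1}}$ or $x_{\epsilon_{m+2}-\epsilon_j}$ may a priori carry terms along \emph{all} of $\set{\epsilon_s-\epsilon_{m+2}}_{s\leq m}$ and \emph{all} of $\set{\epsilon_{m+1}-\epsilon_r}_{r\geq m+2}$ simultaneously (not just the ``matching'' family), as well as terms $x_{\epsilon_s-\epsilon_t}$ with $s<t\leq m$; and the vectors with leading terms $x_{\epsilon_i-\epsilon_j}$ ($j\geq m+3$) may carry terms $x_{\epsilon_s-\epsilon_{m+2}}$. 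Eliminating all of these extra unknowns by explicit bracket computations is the bulk of the actual proof, so the linear system you propose to solve is not the right one, and the answer you want does not follow from it.

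Beyond this, the proposal stops short of the proof itself. In Cases 1 and 2, ``the bracket relations cascade because $R$ is a nilradical'' is not an argument: the elimination genuinely uses $m\geq 2$ to supply two distinct indices to play off against each other (e.g.\ in Case 1 the coefficients of $x_{\epsilon_{m+1}-\epsilon_r}$ in $y_i$ are killed only by bracketing $y_i$ with a \emph{different} $y_j$), and your sketch never identifies where this hypothesis enters. In Case 3 you explicitly defer ``the technical heart''---solving the coupled system and checking that its solution space is exactly the one-parameter orbit of $\exp(\ad(a x_{\alpha_{m+1}}))$. That computation \emph{is} the theorem; as written, the proposal is a correct plan built on an incorrectly specified system of unknowns, with the decisive verification left undone.
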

\begin{proof}
$\bullet$ Case 1. $\mrlt(\mfe)=\Phir_{m}$. We write the reduced echelon form basis for $\mfe$
\begin{align*}
&x_{ij}=x_{\epsilon_{i}-\epsilon_{j}}, 1\leq i\leq m\;\text{and}\; m+2\leq j\leq 2m+2 \\
&y_{i}=x_{\epsilon_{i}-\epsilon_{m+1}}+\sum_{s=1}^{i-1}\sum_{t=s+1}^{m} a_{ist}x_{\epsilon_{s}-\epsilon_{t}}
+\sum_{r=m+2}^{2m+2} b_{ir}x_{\epsilon_{m+1}-\epsilon_{r}}, 1\leq i\leq m
\end{align*}
Let $1\leq i\leq m$ and $2\leq t \leq m$, the coefficient of $x_{\epsilon_{s}-\epsilon_{j}}$ in
$[y_{i},x_{tj}]$ is $a_{ist}N_{\epsilon_{s}-\epsilon_{t},\epsilon_{t}-\epsilon_{j}}$, this gives
$a_{ist}=0$ for all $i,s$ and $t$. If $i,j\leq m$ are distinct, then the coefficient of $x_{\epsilon_{j}-\epsilon_{r}}$ in
$[y_{i},y_{j}]$ is $b_{ir}N_{\epsilon_{m+1}-\epsilon_{r},\epsilon_{j}-\epsilon_{m+1}}$. 
As $m\geq 2$, this gives all $b_{ir}=0$. Therefore, we have $\mfe=\mrlie(\Phir_{m})$. \\
$\bullet$ Case 2. $\mrlt(\mfe)=\Phir_{m+2}$. The reduced echelon form basis of $\mfe$ is of the form
\begin{align*}
&x_{ij}=x_{\epsilon_{i}-\epsilon_{j}}+\sum_{s=1}^{i-1}a_{ijs} x_{\epsilon_{s}-\epsilon_{m+2}}, 
1\leq i\leq m+1\;\text{and}\;m+3\leq j\leq 2m+2 \\
&y_{j}=x_{\epsilon_{m+2}-\epsilon_{j}}+\sum_{s=1}^{m+1}\sum_{t=s+1}^{m+2} b_{jst}x_{\epsilon_{s}-\epsilon_{t}}, m+3\leq j\leq 2m+2
\end{align*}
Let $m+3\leq j,j^{'}\leq 2m+2$ and $2\leq t\leq m+1$. If $j$ and $j^{'}$ are distinct, 
then the coefficient 
of $x_{\epsilon_{s}-\epsilon_{j^{'}}}$ in $[y_{j},x_{tj^{'}}]$ is 
$b_{jst}N_{\epsilon_{s}-\epsilon_{t},\epsilon_{t}-\epsilon_{j^{'}}}$, 
it gives $b_{jst}=0$ for all $j,s$ and $t <m+2$ as $m\geq 2$.
Then the coefficient of $x_{\epsilon_{s}-\epsilon_{j}}$ in $[y_{j},x_{ij^{'}}]$ is 
$a_{ij^{'}s}N_{\epsilon_{m+2}-\epsilon_{j},\epsilon_{s}-\epsilon_{m+2}}$, this implies
$a_{ijs}=0$ for all $i,j,s$. It remains to consider $b_{js(m+2)}$. 
If $m+3\leq i,j\leq 2m+2$ are distinct, then the coefficient of $x_{\epsilon_{s}-\epsilon_{i}}$ 
in $[y_{i},y_{j}]$ is $b_{js(m+2)}N_{\epsilon_{m+2}-\epsilon_{i},\epsilon_{s}-\epsilon_{m+2}}$. 
According to this together with $m\geq 2$, we get $b_{js(m+2)}=0$ for all $j$ and $s$.
Therefore, we have $\mfe=\mrlie(\Phir_{m+2})$.\\
$\bullet$ Case 3. $\mrlt(\mfe)=\Phi_{m+1,m+2}^{\mathrm{odd}}$. The reduced echelon form basis of 
$\mfe$  consists of 
\begin{align*}
&x_{ij}=x_{\epsilon_{i}-\epsilon_{j}} +\sum_{s=1}^{i-1}a_{ijs}x_{\epsilon_{s}-\epsilon_{m+2}} \\
&y_{i}=x_{\epsilon_{i}-\epsilon_{m+1}}+\sum_{s=1}^{i-1}\sum_{t=s+1}^{m}b_{ist}x_{\epsilon_{s}-\epsilon_{t}}
  +\sum_{r=m+2}^{2m+2}d_{ir}x_{\epsilon_{m+1}-\epsilon_{r}}+
  \sum_{r=1}^{m}k_{ir}x_{\epsilon_{r}-\epsilon_{m+2}} \\
&z_{j}=x_{\epsilon_{m+2}-\epsilon_{j}}+\sum_{s=1}^{m-1}\sum_{t=s+1}^{m}h_{jst}x_{\epsilon_{s}-\epsilon_{t}}+
  \sum_{r=m+2}^{2m+2}\ell_{jr}x_{\epsilon_{m+1}-\epsilon_{r}}+
  \sum_{r=1}^{m}\xi_{jr}x_{\epsilon_{r}-\epsilon_{m+2}}
\end{align*}
where $1\leq i\leq m$ and $m+3\leq j\leq 2m+2$.
By the same argument as before we deduce that $b_{ist}
=h_{jst}=0$ for all $s$ and $t$. If $i,j\leq m$ are distinct, then the coefficient of $x_{\epsilon_{j}
-\epsilon_{r}}$ in $[y_{i},y_{j}]$ is $d_{ir}N_{\epsilon_{m+1}-\epsilon_{r},\epsilon_{j}-\epsilon_{m+1}}$.
As $m\geq 2$, this gives $d_{ir}=0$ and the argument can also be  applied to $z_{j}$ which ensures that
$\xi_{jr}=0$. 
Let $\lambda=-k_{11}N_{\epsilon_{m+1}-\epsilon_{m+2}, \epsilon_{1}-\epsilon_{m+1}}$.
Conjugation by $\exp(\ad(\lambda  x_{\alpha_{m+1}}))$ to $\mfe$ ensures that the image 
of $y_{1}$ has no term $x_{\epsilon_{1}-\epsilon_{m+2}}$. We may assume $k_{11}=0$.
We compute the coefficient of $x_{\epsilon_{1}-\epsilon_{r}}$ in $[y_{1},z_{j}]$ 
which is $\ell_{jr}N_{\epsilon_{1}-\epsilon_{m+1},\epsilon_{m+1}-\epsilon_{r}}$, 
giving $\ell_{jr}=0$ for all $j$ and $r$. Then the coefficient
of $x_{\epsilon_{r}-\epsilon_{j}}$ in $[y_{i},z_{j}]$ is $k_{ir}N_{\epsilon_{r}-\epsilon_{m+2},
\epsilon_{m+2}-\epsilon_{j}}$, this gives $k_{ir}=0$ for all $i$ and $r$, and this also applies to 
$[x_{ij},z_{j^{'}}]$ from which we can get $a_{ijs}=0$. 
As a result, we get $\mfe=\mrlie(\Phi_{m+1,m+2}^{\mathrm{odd}})^{\exp(\ad(ax_{\alpha_{m+1}}))}$ where
$a=-\lambda$.
\end{proof}

\begin{Thm}{\label{con-A-2m}}
Suppose that $G$ is of type $A_{2m}$ with $m\geq 3$. 
If $\mfe\in \mbE(\msrke(\mfg)-1,\mfu)$ satisfies $\mrlt(\mfe)=\Phi_{m,m+1}^{\mathrm{ev}}$ or 
$\Phi_{m+1,m+2}^{\mathrm{ev}}$ then there exists some $a$ such that
$\mfe=\mrlie(\Phi_{m,m+1}^{\mathrm{ev}})^{\exp(\ad(ax_{\alpha_{m}}))}$ or
$\mrlie(\Phi_{m+1,m+2}^{\mathrm{ev}})^{\exp(\ad(ax_{\alpha_{m+1}}))}$ respectively.
\end{Thm}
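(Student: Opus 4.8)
The plan is to adapt Case 3 of the proof of Theorem \ref{con-A-2m+1} to the two configurations of the present statement, treating $\mrlt(\mfe)=\Phi_{m+1,m+2}^{\mathrm{ev}}$ first, as it is closest to the odd case. Both $\Phi_{m+1,m+2}^{\mathrm{ev}}$ and $\Phi_{m,m+1}^{\mathrm{ev}}$ are maximal commuting subsets of $\Phi^{+}$ that fail to be maximal in $\Phi$ by exactly one non-positive root, namely $\epsilon_{m+2}-\epsilon_{m+1}$, respectively $\epsilon_{m+1}-\epsilon_{m}$; it is precisely this deficiency that forces a single free parameter to survive and produces the one-parameter family $\exp(\ad(a x_{\alpha_{m+1}}))$, respectively $\exp(\ad(a x_{\alpha_{m}}))$, in the conclusion.

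First I would fix the reverse lexicographic ordering $\succeq$ of the previous subsection and write the reduced echelon form basis of $\mfe$, grouping its vectors by the type of leading root: the bulk vectors $x_{ij}$ with leading root $\epsilon_{i}-\epsilon_{j}$ ($i\leq m$, $j\geq m+3$), the column vectors $y_{i}$ with leading root $\epsilon_{i}-\epsilon_{m+1}$ ($1\leq i\leq m$), and the row vectors $z_{j}$ with leading root $\epsilon_{m+2}-\epsilon_{j}$ ($m+3\leq j\leq 2m+1$). Each such vector equals its leading root vector plus correction terms supported on positive roots strictly below it in $\succeq$ and, by reducedness, outside $\mrlt(\mfe)$; in particular the eventual free direction $\epsilon_{1}-\epsilon_{m+2}$ sits at the bottom of the chain, in $\Phi_{m}^{\mrrad}\cap\Phi_{m+1}^{\mrrad}$.

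The heart of the argument is the repeated use of $[\mfe,\mfe]=0$: for each pair of basis vectors I would expand the bracket via $[x_{\gamma},x_{\delta}]=N_{\gamma,\delta}\,x_{\gamma+\delta}$ (with $N_{\gamma,\delta}\neq 0$ since $p$ is good) and read off the coefficient of a single, carefully chosen root vector, whose vanishing annihilates one correction coefficient. As in Lemma \ref{submax-A-2m}, the hypothesis $m\geq 3$ is exactly what supplies two distinct indices $i,i^{'}$ away from the distinguished position, so that the isolated coefficient is forced to zero rather than merely constrained. Processing the vectors in the order dictated by $\succeq$ clears every correction coefficient except the coefficient of $x_{\epsilon_{1}-\epsilon_{m+2}}$ in $y_{1}$. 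Choosing $a$ so that $\exp(\ad(a x_{\alpha_{m+1}}))$ cancels precisely this term, and noting that this conjugation fixes every leading root vector, I would re-run the remaining brackets to conclude that the conjugated algebra carries no correction terms at all, that is $\mfe^{\exp(\ad(-a x_{\alpha_{m+1}}))}=\mrlie(\Phi_{m+1,m+2}^{\mathrm{ev}})$, as asserted.

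The main obstacle is the bookkeeping: one must check that the correction terms admitted by the echelon form are exactly those below the leading root and outside $\mrlt(\mfe)$, and that they can be eliminated in an order for which each commutator isolates a single coefficient, so that no circular dependence arises. The case $\mrlt(\mfe)=\Phi_{m,m+1}^{\mathrm{ev}}$ is then handled by a parallel computation in which the roles of the columns $m+1,m+2$ are played by $m,m+1$ and the surviving coefficient is cleared by $\exp(\ad(a x_{\alpha_{m}}))$; alternatively one may try to deduce it from the first case through the graph automorphism $\alpha_{i}\mapsto\alpha_{2m+1-i}$ of $\mfg$, which interchanges $\alpha_{m}$ and $\alpha_{m+1}$ together with the two configurations, after verifying its compatibility with the chosen ordering.
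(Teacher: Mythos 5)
Your plan follows the paper's proof essentially verbatim: the same reduced echelon bases grouped by leading root, the same commutator bookkeeping in which $m\geq 3$ supplies the two distinct indices needed to isolate each coefficient, and the same single conjugation by $\exp(\ad(a x_{\alpha_{m+1}}))$ (resp.\ $\exp(\ad(a x_{\alpha_{m}}))$) to clear the one surviving coefficient --- the paper kills $c_{11}$, the coefficient of $x_{\epsilon_{1}-\epsilon_{m+2}}$ in $y_{1}$, in the $\Phi_{m+1,m+2}^{\mathrm{ev}}$ case, and the coefficient of $x_{\epsilon_{m}-\epsilon_{m+2}}$ in the vector led by $x_{\epsilon_{m+1}-\epsilon_{m+2}}$ in the $\Phi_{m,m+1}^{\mathrm{ev}}$ case, handling both cases by direct computation rather than by the graph automorphism. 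One minor imprecision: $\exp(\ad(a x_{\alpha_{m+1}}))$ does \emph{not} fix the leading vectors $x_{\epsilon_{i}-\epsilon_{m+1}}$ and $x_{\epsilon_{m+2}-\epsilon_{j}}$ (it adds multiples of $x_{\epsilon_{i}-\epsilon_{m+2}}$ and $x_{\epsilon_{m+1}-\epsilon_{j}}$), but these lie among the permitted correction directions below the leading terms, so the echelon shape and $\mrlt(\mfe)$ are preserved and your final ``re-run the brackets'' step remains valid.
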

\begin{proof}
$\bullet$\;Case 1.\;$\mrlt(\mfe)=\Phi_{m,m+1}^{\mathrm{ev}}$.
Then the reduced echelon form basis of $\mfe$ is $x_{\epsilon_{i}-\epsilon_{j}}$ for $1\leq i\leq m-1$ and $m+2\leq j\leq 2m+1$ and
\begin{align*}
      &y_{j}=x_{\epsilon_{m+1}-\epsilon_{j}}+\sum_{s=m+2}^{2m+1}a_{js}x_{\epsilon_{m}-\epsilon_{s}}\\
      &z_{i}=x_{\epsilon_{i}-\epsilon_{m}}+\sum_{u=1}^{i-1}\sum_{v=u+1}^{m-1}b_{iuv}x_{\epsilon_{u}-\epsilon_{ v}}+\sum_{s=1}^{m}c_{is}
      x_{\epsilon_{s}-\epsilon_{m+1}}+\sum_{s=m+2}^{2m+1}d_{is}x_{\epsilon_{m}-\epsilon_{s}}
\end{align*}
where $m+2\leq j\leq 2m+1$ for $y_{j}$ and $1\leq i<m$ for $z_{i}$. 
Let $\lambda=-a_{(m+2)(m+2)}N_{\epsilon_{m}-\epsilon_{m+1},\epsilon_{m+1}-\epsilon_{m+2}}$.
Using conjugation given by
$\exp(\ad(\lambda x_{\epsilon_{m}-\epsilon_{m+1}}))$ to $\mfe$,  we have explicitly
\begin{align*}
  \exp(\ad(\lambda x_{\epsilon_{m}-\epsilon_{m+1}}))(y_{m+2})=x_{\epsilon_{m+1}-\epsilon_{m+2}}
   +\sum_{s= m+3}^{2m+1}a_{(m+2)s}x_{\epsilon_{m}-\epsilon_{s}},
\end{align*}
which allows us to assume $a_{(m+2)(m+2)}=0$.
Then we compute for $1\leq i< m$
\begin{align*}
 [y_{m+2},z_{i}]=\sum_{s=1}^{m}c_{is}N_{\epsilon_{m+1}-\epsilon_{m+2},\epsilon_{s}
     -\epsilon_{m+1}}x_{\epsilon_{s}-\epsilon_{m+2}}+
     \sum_{s=m+3}^{2m+1}a_{(m+2)s}N_{\epsilon_{m}-\epsilon_{s},\epsilon_{i}-\epsilon_{m}}x_{\epsilon_{i}
     -\epsilon_{s}}.
\end{align*}
Notice that these items $x_{\epsilon_{s}-\epsilon_{m+2}}$ and $x_{\epsilon_{i}-\epsilon_{s}}$ are different,
so $c_{is}=0$ for all $i$ and $s$ and $a_{(m+2)s}=0$ for all $s$. Further we can get $a_{js}=0$ for all $j$ and $s$ by
seeing $[y_{j},z_{1}]=0$.
When $1 < v<m$, we compute the coefficient of $x_{\epsilon_{u}-\epsilon_{m+2}}$ in 
$[x_{\epsilon_{v}-\epsilon_{m+2}},z_{i}]$, that is
$N_{\epsilon_{v}-\epsilon_{m+2},\epsilon_{u}-\epsilon_{v}}b_{iuv}$. 
This gives $b_{iuv}=0$ for all $i,u$ and $v$.
If $1\leq i,i^{'}<m$ are distinct, then the coefficient of $x_{\epsilon_{i}-\epsilon_{s}}$ in 
$[z_{i},z_{i^{'}}]$ is
$N_{\epsilon_{i}-\epsilon_{m},\epsilon_{m}-\epsilon_{s}}d_{i^{'}s}$. As $m\geq 3$, this  gives $d_{is}=0$ 
for all $i,s$, and consequently $\mfe=\mrlie(\Phi_{m,m+1}^{\mathrm{ev}})^{\exp(\ad(ax_{\alpha_{m}}))}$ for 
$a=-\lambda$.\\
$\bullet$\;Case 2.\;$\mrlt(\mfe)=\Phi_{m+1,m+2}^{\mathrm{ev}}$. 
We write the reduced basis for $1\leq i\leq m$ and $m+3\leq j \leq 2m+1$
\begin{align*}
&x_{ij}=x_{\epsilon_{i}-\epsilon_{j}}+\sum_{t=1}^{i-1}a_{ijt}x_{\epsilon_{t}-\epsilon_{m+2}},\\
&y_{i}=x_{\epsilon_{i}-\epsilon_{m+1}}+\sum_{s=m+2}^{2m+1}b_{is}x_{\epsilon_{m+1}-\epsilon_{s}}
           +\sum_{s=1}^{m}c_{is}x_{\epsilon_{s}-\epsilon_{m+2}},\\
&z_{j}=x_{\epsilon_{m+2}-\epsilon_{j}}+\sum_{u=1}^{m-1}\sum_{v=u+1}^{m}d_{juv}x_{\epsilon_{u}-\epsilon_{v}}
 +\sum_{s=m+2}^{2m+1}f_{js}x_{\epsilon_{m+1}-\epsilon_{s}}+\sum_{s=1}^{m}k_{js}x_{\epsilon_{s}-\epsilon_{m+2}}.
\end{align*}
If $m+3\leq j, j^{'}\leq 2m+1$ and $j\neq j^{'}$, choose $1< v<m+1$, then we compute
\begin{align*}
  [x_{vj},z_{j^{'}}]=\sum_{u=1}^{v-1}d_{j^{'}uv}N_{\epsilon_{v}-\epsilon_{j},\epsilon_{u}-\epsilon_{v}}
                x_{\epsilon_{u}-\epsilon_{j}} +\sum_{t=1}^{i-1}
                a_{vjt}N_{\epsilon_{t}-\epsilon_{m+2},\epsilon_{m+2}-\epsilon_{j^{'}}}x_{\epsilon_{t}-\epsilon_{j^{'}}}.
\end{align*}
As $m\geq 3$, this gives $d_{juv}=0$ and consequently $a_{ijt}=0$ by seeing the coefficient of 
$x_{\epsilon_{t}-\epsilon_{j}}$ in $[x_{ij},z_{j}]$. If $1\leq i ,i^{'}\leq m$ and $i\neq i^{'}$, then the coefficient of
$x_{\epsilon_{i}-\epsilon_{s}}$ in $[y_{i},y_{i^{'}}]$ is $N_{\epsilon_{i}-\epsilon_{m+1},
   \epsilon_{m+1}-\epsilon_{s}}b_{i^{'}s}$, so $b_{is}=0$ for all $i$ and $s$. Now let $\xi=
   -c_{11}N_{\epsilon_{m+1}-\epsilon_{m+2},\epsilon_{1}-\epsilon_{m+1}}$, 
conjugation given by $\exp(\ad(\xi x_{\epsilon_{m+1}-\epsilon_{m+2}}))$ lets us assume that $c_{11}=0$.
Then we compute
\begin{align*}
    [y_{1},z_{j}]=\sum_{s=m+2}^{2m+1}f_{js}N_{\epsilon_{1}-\epsilon_{m+1},\epsilon_{m+1}-\epsilon_{s}}
                         x_{\epsilon_{1}-\epsilon_{s}}+
                        \sum_{s=2}^{m}c_{1s}N_{\epsilon_{s}-\epsilon_{m+2},
                         \epsilon_{m+2}-\epsilon_{j}}x_{\epsilon_{s}-\epsilon_{j}}
\end{align*}
It follows that $c_{1s}$ and $f_{js}$ are zero. Further $c_{is}=0$ for all $i$ and $s$ by computing $[y_{i},z_{1}]$.
Finally, the coefficient of $x_{\epsilon_{s}-\epsilon_{i}}$ in $[z_{i},z_{j}]$ for $i\neq j$ is 
$N_{\epsilon_{m+2}-\epsilon_{i},\epsilon_{s}-\epsilon_{m+2}}k_{js}$, so $k_{js}=0$ for all $j$ and $s$. 
Now we have $\mfe=\mrlie(\Phi_{m+1,m+2}^{\mathrm{ev}})^{\exp(\ad(ax_{\alpha_{m+1}}))}$ for $a=-\xi$ and complete the proof.
\end{proof}

\subsection{Irreducible components}
\begin{Definition}(\cite[Definition 2.10]{PS})
We say $R\subset \Phi^{+}$ is an \emph{ideal} if $\alpha+\beta \in R$ whenever $\alpha\in R, \beta\in \Phi^{+}$
and $\alpha+\beta\in \Phi^{+}$. 
\end{Definition}
\begin{Lem}{\label{submax}}
Suppose that $G$ is of type $A_{n}$ with $n\geq 5$. Then 
\begin{align*}
 \mbE(\msrke(\mfg)-1,\mfu)_{\mmax} \subseteq \bigcup_{R\;\mathrm{an\;ideal}}G.\mrlie(R),
\end{align*}
and the ideals occurring here for each type are listed in the third column of the following Table 
\end{Lem}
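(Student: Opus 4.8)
The plan is to run each $\mfe\in\mbE(\msrke(\mfg)-1,\mfu)_{\mmax}$ through the apparatus already assembled. I would first apply the well-definedness of $\mrlt$ on the maximal locus---Lemma \ref{submax-A-2m+1} for type $A_{2m+1}$ and Lemma \ref{submax-A-2m} for type $A_{2m}$---to conclude that $\mrlt(\mfe)$ lies in $\mrmax_{\msrke(\mfg)-1}(\Phi)$, and then read off the finitely many possible values from Table \ref{sub-max}: one of $\Phir_{m},\Phir_{m+2},\Phi_{m+1,m+2}^{\mathrm{odd}}$ in the odd case, and one of $\Phi_{m,m+1}^{\mathrm{ev}},\Phi_{m+1,m+2}^{\mathrm{ev}}$ in the even case. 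For each such value the structure theorems \ref{con-A-2m+1} and \ref{con-A-2m} identify $\mfe$ completely: either $\mfe=\mrlie(R_{0})$ (when $R_{0}=\Phir_{m}$ or $\Phir_{m+2}$), or $\mfe=\mrlie(R_{0})^{\exp(\ad(a x_{\gamma}))}$ for a single simple root $\gamma$ and some $a\in\Bk$. Since $\exp(\ad(a x_{\gamma}))=\Ad(\exp(a x_{\gamma}))$ with $\exp(a x_{\gamma})\in G$, in every case
\[
 \mfe\in G.\mrlie(R_{0}),\qquad R_{0}\in\mrmax_{\msrke(\mfg)-1}(\Phi).
\]

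It then remains to replace each $R_{0}$ by a bona fide ideal lying in the same $G$-orbit. The radical sets $\Phir_{m}$ and $\Phir_{m+2}$ are already ideals, since a positive root belongs to $\Phir_{i}$ exactly when its coefficient of $\alpha_{i}$ is nonzero, a condition preserved by adding positive roots. For each remaining set I would exhibit one Weyl element straightening it onto a radical set with its minimal element removed. Writing $w=(m+1,\,m+2)\in\msW$ for the transposition of these two indices, one has $w(J)=\{1,\ldots,m+1\}$ for $J=\{1,\ldots,m,m+2\}$, so $w$ carries $\phi(J)$ bijectively onto $\phi(w J)=\Phir_{m+1}\subset\Phi^{+}$; as $\phi(J)$ has the single negative root $\epsilon_{m+2}-\epsilon_{m+1}$, whose image is $\alpha_{m+1}$, this yields
\[
 w\bigl(\Phi_{m+1,m+2}^{\mathrm{odd}}\bigr)=\Phir_{m+1}\setminus\{\alpha_{m+1}\}.
\]
The transpositions $(m,m+1)$ and $(m+1,m+2)$ treat the even sets identically, giving $\Phir_{m}\setminus\{\alpha_{m}\}$ and $\Phir_{m+1}\setminus\{\alpha_{m+1}\}$ respectively. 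In the root poset of type $A$, $\Phir_{i}$ is the rectangle $\{\epsilon_{a}-\epsilon_{b}\,:\,a\le i<b\}$ whose unique minimal element is the simple root $\alpha_{i}$; deleting the bottom of an order-ideal again gives an order-ideal, so each set above is an ideal $R$. Lifting $w$ to $\dot w\in N_{G}(T)$ gives $\dot w.\mrlie(R_{0})=\mrlie(wR_{0})=\mrlie(R)$, whence $G.\mrlie(R_{0})=G.\mrlie(R)$ and $\mfe\in G.\mrlie(R)$. Recording these ideals $R$ fills in the third column of the table.

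The main obstacle will be the combinatorial bookkeeping concentrated in the second step: recognizing that each straightened set is precisely a radical set minus its minimal root, and that such a deletion remains an ideal. This is cleanest through the poset description of ideals for type $A$---a subset $R\subseteq\{\epsilon_{a}-\epsilon_{b}\,:\,a<b\}$ is an ideal iff $\epsilon_{a}-\epsilon_{b}\in R$ together with $a'\le a$ and $b'\ge b$ forces $\epsilon_{a'}-\epsilon_{b'}\in R$---under which both assertions are immediate once the image $\phi(wJ)=\Phir_{i}$ has been computed. The one point needing care is that $w$ keeps the relevant roots positive, but this is automatic: $\phi(wJ)=\Phir_{i}$ consists entirely of positive roots, and $w$ meets the negative part of $\phi(J)$ only in its single negative root, which it sends to the deleted simple root $\alpha_{i}$.
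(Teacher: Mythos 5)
Your proposal is correct and follows essentially the same route as the paper: feed the maximality lemmas for $\mrlt$ into Theorems \ref{con-A-2m+1} and \ref{con-A-2m} to pin down $\mfe$ up to conjugation by a root-subgroup element, then move each set in $\mrmax_{\msrke(\mfg)-1}(\Phi)$ onto an ideal by the simple reflection $s_{m+1}$ (resp.\ $s_{m}$), noting that $\Phir_{i}$ and $\Phir_{i}\setminus\{\alpha_{i}\}$ are ideals. The paper merely asserts these conjugations; your explicit computation of $w(\phi(J))=\phi(wJ)$ and of the unique negative root of $\phi(J)$ supplies the omitted bookkeeping and is accurate.
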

\begin{table}[ht]
\centering
\begin{tabular}[b]{|c|c|c|c|}
\hline
Type  & \parbox{60pt}{\vspace{8pt}\centering Restrictions on rank\vspace{8pt}}  & Ideal $R$  \\
\hline \hline
$A_{2m+1}$  & $m\geq 2$ &$\Phi_{m}^{\mrrad},\Phi_{m+2}^{\mrrad},\Phi_{m+1}^{\mrrad}\setminus\set{\alpha_{m+1}}$  \\
\hline
$A_{2m}$   & $m \geq 3$ &$\Phi_{m}^{\mrrad}\setminus\set{\alpha_{m}}, \Phi_{m+1}^{\mrrad}\setminus\set{\alpha_{m+1}}$ \\
\hline
\end{tabular}
\caption{Ideals for Lemma \ref{submax}  }
\label{ideals1}
\end{table}
\begin{proof}
For type $A_{2m+1}$, $\Phir_{m}$ and $\Phir_{m+2}$ both are ideals, and $\Phi_{m+1,m+2}^{\odd}$ can be
conjugated to $\Phi_{m+1}^{\mrrad}\setminus\set{\alpha_{m+1}}$ by a simple reflection $s_{m+1}$.
For type $A_{2m}$, $\Phi_{m,m+1}^{\ev}$ is conjugate to $\Phir_{m}\setminus\set{\alpha_{m}}$ by $s_{m}$, 
and $\Phi_{m+1,m+2}^{\ev}$ is conjugate to $\Phir_{m+1}\setminus\set{\alpha_{m+1}}$ by $s_{m+1}$.  Then 
it is a summarization of Theorem \ref{con-A-2m+1} and  Theorem {\ref{con-A-2m}}.
\end{proof}

\begin{Cor}{\label{union}}
Let $G$ be a standard simple algebraic $\Bk$-group with root system $A_{n}\;(n\geq 5)$. Then
\[ (\ast)\ \ \ \ \mbE(\msrke(\mfg)-1,\mfg)= \bigcup_{R\; \mathrm{an\; ideal}}G.\mrlie(R) \cup  \bigcup_{I \;\mathrm{an\; ideal}}G.\mbE(\msrke(\mfg)-1,\mrlie(I)) \]
\end{Cor}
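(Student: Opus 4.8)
The plan is to prove the two inclusions of $(\ast)$ separately; the forward inclusion is where the content lies, and it combines three ingredients: the reduction to the unipotent radical, the classification of maximal elements in Lemma~\ref{submax}, and the description of $\mbE(\msrke(\mfg),\mfg)$ from \cite{PS}. First I would invoke \cite[Lemma 2.2]{Pre2} to write $\mbE(\msrke(\mfg)-1,\mfg)=G.\mbE(\msrke(\mfg)-1,\mfu)$. Since the right-hand side of $(\ast)$ is a union of $G$-orbits, hence $\Ad(G)$-stable, it suffices to show that every $\mfe\in\mbE(\msrke(\mfg)-1,\mfu)$ lies in it. I would then dichotomize according to whether or not $\mfe$ is a maximal elementary subalgebra of $\mfg$. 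If $\mfe$ is maximal, then, being contained in $\mfu$, it is a fortiori maximal among the elementary subalgebras inside $\mfu$, so $\mfe\in\mbE(\msrke(\mfg)-1,\mfu)_{\mmax}$ and Lemma~\ref{submax} places it in $\bigcup_{R\;\mathrm{an\;ideal}}G.\mrlie(R)$.

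If $\mfe$ is not maximal, it is properly contained in some elementary subalgebra $\mfe'$. Because $\dim\mfe=\msrke(\mfg)-1$ and $\msrke(\mfg)$ is the largest possible dimension, the only room available forces $\dim\mfe'=\msrke(\mfg)$, so $\mfe'\in\mbE(\msrke(\mfg),\mfg)$. Using the determination of $\mbE(\msrke(\mfg),\mfg)$ in \cite{PS}, the subalgebra $\mfe'$ is $G$-conjugate to $\mrlie(I)$ for some maximal subset $I\in\mrmax(\Phi)$; applying the corresponding element of $G$, the image of $\mfe$ becomes a codimension-one subspace of $\mrlie(I)$ and thus an element of $\mbE(\msrke(\mfg)-1,\mrlie(I))$. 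Hence $\mfe\in\bigcup_{I}G.\mbE(\msrke(\mfg)-1,\mrlie(I))$, and together with the maximal case this establishes $\subseteq$.

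For the reverse inclusion I would verify that both families genuinely consist of $(\msrke(\mfg)-1)$-dimensional elementary subalgebras. Each ideal $R$ in Table~\ref{ideals1} is a set of pairwise commuting roots of cardinality $\msrke(\mfg)-1$, so $\mrlie(R)\in\mbE(\msrke(\mfg)-1,\mfg)$; and for $I\in\mrmax(\Phi)$ the subalgebra $\mrlie(I)$ is elementary of dimension $\msrke(\mfg)$, whence each of its codimension-one subspaces is again elementary and $\mbE(\msrke(\mfg)-1,\mrlie(I))\subseteq\mbE(\msrke(\mfg)-1,\mfg)$. As the defining conditions are preserved by $\Ad(G)$, the full orbits lie in $\mbE(\msrke(\mfg)-1,\mfg)$, giving $\supseteq$. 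I expect the main obstacle to be the non-maximal case: one must be sure that a non-maximal codimension-one elementary subalgebra always extends to a maximal-dimensional one --- here automatic, since the dimension gap to $\msrke(\mfg)$ is exactly one --- and one must import correctly the \cite{PS} description of $\mbE(\msrke(\mfg),\mfg)$ as the union of the $G$-orbits of $\mrlie(I)$, $I\in\mrmax(\Phi)$.
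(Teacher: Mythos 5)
Your argument for the displayed equality $(\ast)$ is correct and follows essentially the same route as the paper: reduction to $\mbE(\msrke(\mfg)-1,\mfu)$ via \cite[Lemma 2.2]{Pre2}, a dichotomy on maximality, Lemma~\ref{submax} for the maximal case, and the description of $\mbE(\msrke(\mfg),\mfg)$ from \cite{PS} (the paper cites Sections 3.2/3.4 there) to place a non-maximal $\mfe$ inside a $G$-translate of $\mrlie(I)$ with $I$ as in Table~\ref{ideals2}. The observation that a non-maximal element of $\mbE(\msrke(\mfg)-1,\mfg)$ automatically sits in codimension one inside a subalgebra of the maximal dimension is exactly the point being used implicitly in the paper.

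Be aware, however, that the corollary's sentence continues past the display: it also asserts that the right-hand side of $(\ast)$ is a union of \emph{irreducible closed} subsets, and this is where most of the paper's proof is spent. To cover that part one still needs: completeness of the projective variety $\mbE(\msrke(\mfg)-1,\mrlie(I))$; the fact that $R$ and $I$ are ideals, so that $\mrlie(R)$ and $\mbE(\msrke(\mfg)-1,\mrlie(I))$ are stabilized by a parabolic subgroup and hence their $G$-saturations are closed (via \cite[Theorem 4.9]{PS} and \cite[Proposition 0.15]{Hum2}); and irreducibility of each piece (a $G$-orbit closure in the first family, the image of a Grassmannian under the $G$-action in the second). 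Your proposal does not address any of this, so as a proof of the full corollary it is incomplete, though as a proof of the set-theoretic identity alone it is sound.
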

is the union of irreducible closed subsets, where $R$ is taken from Table \ref{ideals1} and $I$ is given as follows:
\begin{table}[ht]
\centering
\begin{tabular}[b]{|c|c|c|c|}
\hline
Type  & \parbox{60pt}{\vspace{8pt}\centering Restrictions on rank\vspace{8pt}}  & Ideal $I$  \\
\hline \hline
$A_{2m+1}$  & $m\geq 2$ &$\Phi_{m+1}^{\mrrad}$  \\
\hline
$A_{2m}$   & $m\geq 3$ &$\Phi_{m}^{\mrrad}, \Phi_{m+1}^{\mrrad}$\\
\hline
\end{tabular}
\caption{Ideals for Corollary \ref{union}}
\label{ideals2}
\end{table}
\begin{proof}
Let $R$ be an ideal in Table 4 and $I$ be an ideal in Table 5. 
We define $X_{1}:=\mrlie(R), X_{2}:=\mbE(\msrke(\mfg)-1,\mrlie(I))$ and 
$Y:= \mbE(\msrke(\mfg)-1,\mfg)$.
Since $X_{2}$ is a projective variety, it is complete, implying that $X_{2}$ is closed in $Y$. 
Since $R$ and $I$ are ideals, it follows that $X_{i}$ is stabilized by a  parabolic subgroup of $G$ 
for $i\in \{1,2\}$ respectively. 
By \cite[Theorem 4.9]{PS} for $X_{1}$ and \cite[Proposition 0.15]{Hum2}  for $X_{2}$, we have 
\[ G.X_{i} \; \text{is closed in}\; Y, \mbox{where}\; i \in \{1,2\}. \]
Since $Y$ is a $G$-variety, $G.X_{1}$ is irreducible as a $G$-orbit. Since 
$\mrlie(I)$ is an elementary subalgebra of $\mfg$, it follows that
$X_{2} =\Gr_{\msrke(\mfg)-1}(\mrlie(I))(\Bk)$ is the Grassmannian which is irreducible. 
Then  $G.X_{2}$ as the image of $X_{2}$ under $G$ is irreducible.
As a result, the right hand of $(\ast)$ is the union of irreducible closed subsets.

By utilizing Lemma \ref{submax} along with \cite[Sect. 3.2/3.4]{PS}, we have
\[  \mbE(\msrke(\mfg)-1,\mfu)\subset \bigcup_{R\; \mathrm{an\; ideal}}G.\mrlie(R) \cup  \bigcup_{I \;\mathrm{an\; ideal}}G.\mbE(\msrke(\mfg)-1,\mrlie(I)). \]
Therefore, we arrive at the equality of $(\ast)$ according to  \cite[Lemma 2.2]{Pre2}.
\end{proof}

\begin{Lem}{\label{W-con}}
Let $\mfe$ be an element of $\mbE(r,\mfg)$ and $R$ be an ideal of commuting roots with $|R|=r$.
Assume that there is $g\in G$, satisfying
\[    g.\mfe =\mrlie(R).  \]
Then $\mrlt(\mfe)$ and $R$ are conjugate by an element of $\msW$.
\end{Lem}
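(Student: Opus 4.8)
The plan is to realise $\mrlie(\mrlt(\mfe))$ as a one-parameter degeneration of $\mfe$, to use the closedness of the relevant orbit to place it in the same $G$-orbit as $\mrlie(R)$, and then to convert a $G$-conjugacy of two $T$-stable subalgebras into a $\msW$-conjugacy of root sets. Write $S:=\mrlt(\mfe)$ and regard $\mfe\subseteq\mfu$, as the definition of $\mrlt$ presupposes. Since the ordering $\succeq$ defining $\mrlt$ is a (reverse-)lexicographic term order, its restriction to the finite set $\Phi^{+}$ is induced by a single weight vector, i.e.\ by a cocharacter $\lambda\in X_{*}(T)$, the signs being fixed so that the $\succeq$-largest roots survive the limit. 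As $\Gr_{r}(\mfg)$ is complete the limit $\lim_{s\to 0}\lambda(s).\mfe$ exists, and a reduced-echelon computation identifies it with the span of the leading root vectors, so that $\lim_{s\to 0}\lambda(s).\mfe=\mrlie(S)$. Since $T$ normalises $\mfu$ this also shows $\mrlie(S)\subseteq\mfu$, and in particular $\mrlie(S)\in\overline{T.\mfe}\subseteq\overline{G.\mfe}$.

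Next I would pass to the ambient $G$-orbit. The hypothesis $g.\mfe=\mrlie(R)$ gives $G.\mfe=G.\mrlie(R)$, hence $\overline{G.\mfe}=\overline{G.\mrlie(R)}$. Because $R$ is an ideal, $G.\mrlie(R)$ is closed by \cite[Theorem 4.9]{PS}, so the inclusion from the previous step upgrades to $\mrlie(S)\in G.\mrlie(R)$; choose $h\in G$ with $h.\mrlie(R)=\mrlie(S)$.

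Finally I would run the standard maximal-tori argument. Both $\mrlie(R)$ and $\mrlie(S)$ are spanned by root vectors, hence are $T$-fixed points of $\Gr_{r}(\mfg)$. Put $H:=\mrstab_{G}(\mrlie(R))$. Since $\mrlie(S)=h.\mrlie(R)$ is fixed by $T$, every $t\in T$ satisfies $h^{-1}th\in H$, so $h^{-1}Th\subseteq H$; also $T\subseteq H$. Thus $T$ and $h^{-1}Th$ are maximal tori of $G$ lying in $H$, so they are conjugate by some $h'\in H$, and then $n:=h'h^{-1}$ normalises $T$ while $n.\mrlie(S)=h'.\mrlie(R)=\mrlie(R)$. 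The Weyl element $w:=nT$ sends the root space of each $\alpha$ to that of $w(\alpha)$, so $n.\mrlie(S)=\mrlie(w(S))$ forces $w(S)=R$; that is, $\mrlt(\mfe)$ and $R$ are $\msW$-conjugate.

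I expect the load-bearing points to be the two inputs feeding into this skeleton. The more delicate is the first: one must check that the combinatorially defined set $\mrlt(\mfe)$ is genuinely computed by a cocharacter of $T$ -- so that $\mrlie(\mrlt(\mfe))$ actually lies in $\overline{G.\mfe}$ -- and fix the orientation so that the $\succeq$-leading terms rather than the trailing ones are retained in the limit. The second is the passage from orbit-closure membership to orbit membership, which relies entirely on the closedness of $G.\mrlie(R)$ for ideals $R$ quoted from \cite{PS}; the concluding maximal-tori step is then formal.
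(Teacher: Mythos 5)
Your argument is correct, but it follows a genuinely different route from the paper's. The paper's proof is a short Bruhat-decomposition argument: writing $g=b\dot{w}b'$ and using that $B\subseteq\mrstab_{G}(\mrlie(R))$ because $R$ is an ideal, it reduces the hypothesis to $b'.\mfe=\mrlie(w^{-1}.R)$ with $b'\in B$, and then the triangularity of the $U$-action with respect to $\succeq$ gives $\mrlt(\mfe)=\mrlt(b'.\mfe)=w^{-1}.R$ directly. You instead degenerate $\mfe$ along a cocharacter realizing $\succeq$ to land $\mrlie(\mrlt(\mfe))$ in $\overline{G.\mfe}$, invoke the closedness of $G.\mrlie(R)$ from \cite[Theorem 4.9]{PS} to upgrade closure membership to orbit membership, and finish with conjugacy of maximal tori in the stabilizer. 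Both routes ultimately lean on the ideal hypothesis, but in different places: the paper uses it only through $B$-stability of $\mrlie(R)$, whereas you use it through the closedness of the orbit and, implicitly, in the maximal-torus step, where you should note that $\mrstab_{G}(\mrlie(R))\supseteq B$ is parabolic, hence connected, so that the two maximal tori of $G$ it contains are indeed conjugate inside it. Your first step also deserves one more sentence: the assertion that the reverse lexicographic order is realized on the finite set $\Phi^{+}$ by a single (suitably oriented, injective) weight vector is the standard coherence fact for term orders on bounded exponent sets, and a positive multiple makes the functional integral, i.e.\ a genuine cocharacter of $T$. What your approach buys is the stronger and reusable fact that $\mrlie(\mrlt(\mfe))\in\overline{G.\mfe}$ for any coherent ordering; what it costs is the dependence on the nontrivial external closedness theorem, which the paper's two-line Bruhat argument avoids entirely.
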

\begin{proof}
By Bruhat decomposition of $G$, there exist elements $b, b^{'}\in B$ and $ w\in \msW$ such that $g=b\dot{w}b^{'}$
where $\dot{w}$ is an element of $N_{G}(T)$ whose image in the Weyl group $\msW$ is $w$.
Since $g.\mfe=\mrlie(R)$, we have $\dot{w}b^{'}.\mfe=b^{-1}.\mrlie(R)$. Notice that $R$ is an ideal, 
implying $B\subset \mrstab_{G}(\mrlie(R))$.
Thus $\dot{w}b^{'}.\mfe=\mrlie(R)$ and 
\[ (\ast\ast) \ \ \ b^{'}.\mfe=\dot{w}^{-1}.\mrlie(R)=\mrlie(w^{-1}.R). \]
Observe that the action of $U_{\alpha}$ on $\mfe$ is lower triangular 
with respect to $\succeq$ for $\alpha \in \Phi^{+}$.
Then the equality $(\ast\ast)$ gives $\mrlt(\mfe)=w^{-1}.R$, as desired.
\end{proof}

\begin{Thm}{\label{irreduciblecom}}
Let $G$ be a standard simple algebraic $\Bk$-group with root system $\Phi$ of type $A_{n}\;(n\geq 5)$. 
Then the irreducible components of $\mbE(\msrke(\mfg)-1,\mfg)$ can be characterised; see Table \ref{irr-com-1}.
\end{Thm}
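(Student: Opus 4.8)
The plan is to read the irreducible components straight off Corollary \ref{union}. That corollary already exhibits $\mbE(\msrke(\mfg)-1,\mfg)$ as a finite union of irreducible closed $G$-stable subsets, so its irreducible components are exactly those members of the list which are maximal under inclusion; since each member is irreducible, a member lying in the union of the others must lie in a single one of them. The whole argument therefore splits into two tasks: first discard the redundant members, and then prove that the survivors are pairwise incomparable, matching the list in Table \ref{irr-com-1}.

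First I would dispose of the redundancies by comparing Tables \ref{ideals1} and \ref{ideals2}. Every ideal $R$ of the shape $\Phir_{i}\setminus\set{\alpha_{i}}$ appearing in Table \ref{ideals1} is a hyperplane in the ideal $I=\Phir_{i}$ of Table \ref{ideals2}, so $\mrlie(R)$ is a point of the Grassmannian $\mbE(\msrke(\mfg)-1,\mrlie(I))$ and hence $G.\mrlie(R)\subseteq G.\mbE(\msrke(\mfg)-1,\mrlie(I))$. This deletes the orbit $G.\mrlie(\Phir_{m+1}\setminus\set{\alpha_{m+1}})$ in type $A_{2m+1}$, and both orbits $G.\mrlie(\Phir_{m}\setminus\set{\alpha_{m}})$, $G.\mrlie(\Phir_{m+1}\setminus\set{\alpha_{m+1}})$ in type $A_{2m}$, leaving precisely the candidates of Table \ref{irr-com-1}.

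It then remains to show the surviving subsets are pairwise incomparable, for which I would play two conjugation invariants against each other. The first is the $\msW$-conjugacy class of the leading-term datum furnished by Lemma \ref{W-con}. For $A_{2m+1}$, if $G.\mrlie(\Phir_{m})=G.\mrlie(\Phir_{m+2})$ then applying Lemma \ref{W-con} to $\mfe=\mrlie(\Phir_{m})$ and $R=\Phir_{m+2}$ would force $\mrlt(\mfe)=\Phir_{m}$ to be $\msW$-conjugate to $\Phir_{m+2}$; but $\Phir_{m}=\phi(\set{1,\ldots,m})$ and $\Phir_{m+2}=\phi(\set{1,\ldots,m+2})$ come from index sets of different cardinalities and $\msW$ permutes indices, so no such conjugacy exists, separating the two closed orbits. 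The second invariant is $\dim\mfz_{\mfg}(-)$, which is constant on $G$-orbits. A direct block computation in $\mfsl_{2m+2}$ shows that $\mrlie(\Phir_{m})$ and $\mrlie(\Phir_{m+2})$ are self-centralizing; since $\mrlie(\Phir_{m+1})$ is abelian of strictly larger dimension, a self-centralizing subalgebra cannot be conjugated into it, so neither orbit lies in the bundle $G.\mbE(\msrke(\mfg)-1,\mrlie(\Phir_{m+1}))$. Conversely that bundle contains the point $\mrlie(\Phir_{m+1}\setminus\set{\alpha_{m+1}})$, whose centralizer strictly contains $\mrlie(\Phir_{m+1})$ and so is \emph{not} self-centralizing; as self-centrality is conjugation-invariant, this point lies in neither orbit, whence the bundle is contained in neither. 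This settles $A_{2m+1}$.

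For $A_{2m}$ the two survivors are $X=G.\mbE(\msrke(\mfg)-1,\mrlie(\Phir_{m}))$ and $Y=G.\mbE(\msrke(\mfg)-1,\mrlie(\Phir_{m+1}))$. The graph automorphism $\tau$ of $G$ satisfies $\tau(\Phir_{m})=\Phir_{m+1}$, hence $\tau(X)=Y$ and in particular $\dim X=\dim Y$; for irreducible closed sets of equal dimension, incomparability is equivalent to $X\neq Y$. I would test this on the ideal $\mfe=\mrlie(\Phir_{m}\setminus\set{\alpha_{m}})\in X$: the key computation is that $\mfz_{\mfg}(\mfe)=\mrlie(\Phir_{m})$ exactly, since $\Phir_{m}\setminus\set{\alpha_{m}}$ still spans the Cartan (killing any toral contribution) and, once $m\geq 3$, admits no new centralizing root vector. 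Were $\mfe\in Y$, some $g$ would carry $\mfe$ into the abelian algebra $\mrlie(\Phir_{m+1})$, forcing $\mfz_{\mfg}(g.\mfe)=\mrlie(\Phir_{m+1})$ by the dimension equality, whence $g.\mrlie(\Phir_{m})=\mrlie(\Phir_{m+1})$ --- impossible, as these are the nilradicals of the non-conjugate standard parabolics $P_{m}$ and $P_{m+1}$. Thus $X\neq Y$ and the two bundles are the components. The main obstacle throughout is exactly this centralizer bookkeeping --- establishing self-centrality of the submaximal radicals and the precise equality $\mfz_{\mfg}(\mrlie(\Phir_{m}\setminus\set{\alpha_{m}}))=\mrlie(\Phir_{m})$ --- which is the same linear-algebra input that the computations in Theorems \ref{con-A-2m+1} and \ref{con-A-2m} were built to supply.
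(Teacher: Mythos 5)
Your proposal is correct, and while it starts from the same reduction as the paper (read the candidate list off Corollary \ref{union}, discard the non-maximal members, then prove pairwise incomparability), the discriminating invariants you use for the hard comparisons are genuinely different. The paper runs everything through the leading-term map: Lemma \ref{W-con} converts each putative inclusion into a $\msW$-conjugacy of subsets of $\Phi^{+}$, which is then refuted either by \cite[Lemma 2.6]{PS} (type $A_{2m+1}$) or, for the bundle-versus-bundle comparison in type $A_{2m}$, by an explicit counting argument inside the symmetric group $\mfS_{2m+1}$. You keep Lemma \ref{W-con} only for the orbit-versus-orbit case and replace the rest by centralizer bookkeeping: the self-centrality of $\mrlie(\Phir_{m})$, $\mrlie(\Phir_{m+2})$ and the exact computation $\mfz_{\mfg}(\mrlie(\Phir_{i}\setminus\set{\alpha_{i}}))=\mrlie(\Phir_{i})$ (both of which do hold, using crucially that the standard hypothesis gives $p\nmid n+1$ so that the scalar part of the $\mathfrak{gl}$-centralizer dies in $\mfsl_{n+1}$), together with the graph-automorphism trick to reduce the $A_{2m}$ bundle comparison to mere inequality of two irreducible closed sets of equal dimension. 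This buys you a shorter and more conceptual replacement for the paper's permutation computation, at the cost of the block-matrix centralizer calculations, which are of the same flavour as (but independent of) Theorems \ref{con-A-2m+1} and \ref{con-A-2m}. One small wording slip: the centralizer of $\mrlie(\Phir_{m+1}\setminus\set{\alpha_{m+1}})$ does not strictly contain $\mrlie(\Phir_{m+1})$ --- it equals it --- but since it strictly contains the subalgebra itself, the non-self-centrality you need still follows.
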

\def\arraystretch{2}
\begin{table}[ht]
\centering
\begin{tabular}[b]{|c|c|c|c|}
\hline
Type  & \parbox{60pt}{\vspace{8pt}\centering Restrictions on rank\vspace{8pt}}  & Irreducible components  \\
\hline \hline
$A_{2m+1}$  & $m\geq 2$ & $G.\mrlie(\Phir_{m})$, $G.\mrlie(\Phir_{m+2})$, $G.\mbE(\msrke(\mfg)-1,\mrlie(\Phir_{m+1}))$\\
\hline
$A_{2m}$   & $m\geq 3$ & $G.\mbE(\msrke(\mfg)-1,\mrlie(\Phir_{m}))$, $G.\mbE(\msrke(\mfg)-1,\mrlie(\Phir_{m+1}))$ \\
\hline
\end{tabular}
\caption{Irreducible components for Theorem \ref{irreduciblecom} }
\label{irr-com-1}
\end{table}
\def\arraystretch{1}
\begin{proof}
By Corollary \ref{union}, it suffices to check the maximality of each irreducible closed subset.
Let $R_{v}$ be an ideal of commuting roots of order $\msrke(\mfg)-1$ for $v \in J:=\set{1,2}$.
Let $I_{v}\in \mrmax(\Phi)$ be an ideal for $v \in J$.
We will apply Lemma \ref{W-con} to the following three cases for $\set{u,v}=J$:
\begin{enumerate}
\item[(1)]$G.\mrlie(R_{v})\subseteq G.\mrlie(R_{u})$.
\item[(2)]$G.\mrlie(R_{v})\subseteq G.\mbE(\msrke(\mfg)-1,\mrlie(I_{u}))$.
\item[(3)]$G.\mbE(\msrke(\mfg)-1,\mrlie(I_{v})) \subseteq G.\mbE(\msrke(\mfg)-1,\mrlie(I_{u}))$.
\end{enumerate}
We conclude that $R_{u}$ and $R_{v}$ are $\msW$-conjugate from $(1)$.
In $(2)$, we have $\mrlie(R_{v})=g.\mfe$ for some $g\in G$ and $\mfe\in \mbE(\msrke(\mfg)-1,\mrlie(I_{u}))$.
Therefore $R_{v}$ and $\mrlt(\mfe)$ are conjugate by an element of $\msW$.
In $(3)$, let $\gamma$ be the unique positive simple root in $I_{v}$ and $\mfe$ be an element of $\mbE(\msrke(\mfg)-1,\mrlie(I_{u}))$
such that $\mrlie(I_{v} \setminus\set{\gamma})=g.\mfe$ for some $g\in G$.
Then we have  $I_{v}\setminus\set{\gamma}$ and $\mrlt(\mfe)$ are $\msW$-conjugate. 

Now we are in the position to classify the irreducible components for $A_{n}(n\geq 5)$: \\
$\bullet$ Type $A_{2m+1}$. 
(a)\;$G.\mrlie(\Phir_{m+1}\setminus\set{\alpha_{m+1}})$ is not maximal because $\mrlie(\Phir_{m+1}\setminus
\set{\alpha_{m+1}})$ is an element of $\mbE(\msrke(\mfg)-1,\mrlie(\Phir_{m+1}))$.
(b)\;If $G.\mrlie(\Phir_{m})\subseteq G.\mrlie(\Phir_{m+2})$ or $G.
\mrlie(\Phir_{m})\subseteq G.\mbE(\msrke(\mfg)-1,\mrlie(\Phir_{m+1}))$ then
$\Phir_{m}$ is $\msW$-conjugate to $\Phir_{m+2}$ or conjugate to $\mrlt(\mfe(\Phir_{m}))$.
Both cases are impossible when we look at \cite[Lemma 2.6]{PS}, this gives $G.\mrlie(\Phir_{m})$ is maximal. 
(c)\;$G.\mrlie(\Phir_{m+2})$ and $G.\mbE(\msrke(\mfg)-1,\mrlie(\Phir_{m+1}))$
are maximal by the same argument of (b). \\
$\bullet$ Type $A_{2m}$. 
(a)\;$G.\mrlie(\Phir_{m}\setminus\set{\alpha_{m}})$ and $G.\mrlie(\Phir_{m+1}
\setminus\set{\alpha_{m+1}})$ are not maximal since they are contained in $G.\mbE(\msrke(\mfg)-1,\mrlie(R^{'}))$
for $R^{'}=\Phir_{m},\Phir_{m+1}$ respectively.
(b)\;We claim $G.\mbE(\msrke(\mfg)-1,\mrlie(\Phir_{m}))$ and $G.\mbE(\msrke(\mfg)-1,\mrlie(\Phir_{m+1}))$ are maximal.
Without loss of generality, we may assume that 
\[  G.\mbE(\msrke(\mfg)-1,\mrlie(\Phir_{m}))\subseteq G.\mbE(\msrke(\mfg)-1,\mrlie(\Phir_{m+1})).\]
Then $\Phir_{m}\setminus\set{\alpha_{m}}$ is $\msW$-conjugate to $\Phir_{m+1}\setminus\set{\gamma}$ where
$\gamma=\alpha_{1}+\cdots+\alpha_{2m}$ is the highest root, and 
consequently $\Phir_{m}\setminus\set{\gamma}$ and $\Phir_{m+1}\setminus\set{\gamma}$ are conjugate. 
Notice that the Weyl group of $A_{2m}$ is the permutation group $\mfS_{2m+1}$. Let 
$w.\Phir_{m}\setminus\set{\gamma}=\Phir_{m+1}\setminus\set{\gamma}$ for some $w\in \msW$ and 
$m+1\leq j_{0}< 2m+1$. We denote by $w(j_{0})$ the corresponding action for $j_{0}$ when $w$ acts on 
$\set{\epsilon_{i}-\epsilon_{j_{0}}}_{1\leq i<m+1}\subseteq\Phir_{m}\setminus\set{\gamma}$. 
Then $w.\set{\epsilon_{i}-\epsilon_{j_{0}}}_{1\leq i<m+1}\subseteq\set{\epsilon_{i}-
\epsilon_{w(j_{0})}}_{1\leq i<m+2}\subseteq \Phir_{m+1}\setminus\set{\gamma}$ and 
$w.\set{\epsilon_{i}-\epsilon_{r}}_{1\leq i<m+1}\nsubseteq \set{\epsilon_{i}-\epsilon_{w(j_{0})}}_{1\leq i<m+2}$
for $m+1\leq r <2m+1 $ with $r\neq j_{0}$. 
As $m\geq 3$, there exists $j_{0}$ such that $w(j_{0})\neq 2m+1$. Then the equality
$|\set{\epsilon_{i}-\epsilon_{j_{0}}}_{1\leq i<m+1}|<
|\set{\epsilon_{i}-\epsilon_{w(j_{0})}}_{1\leq i<m+2}|$ shows the impossibility.\\
\end{proof}

\begin{Remarks}
We would like to refer the reader to the CAU-thesis \cite{Pan} for other classical types:
\def\arraystretch{2}
\begin{table}[ht]
\centering
\begin{tabular}[b]{|c|c|c|c|}
\hline
Type  & \parbox{60pt}{\vspace{8pt}\centering Restrictions on rank\vspace{8pt}} & Irreducible components  \\
\hline
$B_n$  & $n \geq 5$ & $G.\mbE(\msrke(\mfg)-1,\mrlie(S_{1}))$ \\
\hline
$C_{n}$ & $n\geq 3$ & $G.\mbE(\msrke(\mfg)-1,\mrlie(\Phir_{n}))$\\
\hline
$D_{n}$ & $n\geq 6$ & $G.\mbE(\msrke(\mfg)-1,\mrlie(\Phir_{n-1}))$, $G.\mbE(\msrke(\mfg)-1,\mrlie(\Phir_{n}))$ \\
\hline
\end{tabular}
\caption{Irreducible components for other classical types}
\label{irr-com-2}
\end{table}
\def\arraystretch{1}
\end{Remarks}

\bigskip

\bigskip


\begin{thebibliography}{00}
\bibitem{Bor} Armand Borel, \textit{Linear Algebraic Groups}.
Graduate Texts in Mathematics volume \textbf{126}. Springer-Verlag, New York, second edition,1991.
\bibitem{Bou} Nicolas Bourbaki, \textit{Lie groups and Lie algebras. Chapters 4-6}.
Elements of Mathematics(Berlin). Springer-Verlag, Berlin, 2002.
\bibitem{Car}R.W. Cater, \textit{Simple group of Lie Type}.
Pure and Applied Math, Vol.\textbf{28}. John Wiley and Sons, London-New York-sydney, 1972.
\bibitem{CFP}J.F. Carlson, E.M. Friedlander and Julia Pevtsova, \textit{Elementary subalgebras of Lie algebras}.
J. Algebra \textbf{442} (2015), 155--189.
\bibitem{Hum1} J.E. Humphreys, \textit{ Reflection Groups and Coxeter Groups}.
Cambridge Studies in Advance Mathematics, Vol.\textbf{29}. Cambridge University Press, Cambridge,1990.
\bibitem{Hum2} J.E. Humphreys, \textit{Conjugacy Classes in Semisimple Algebraic Groups}.
Mathematical Surveys and Monographs, Vol.\textbf{43}. American Mathematical Society Cambridge,1990.
\bibitem{Mal} A. Malcev, \textit{Commutative subalgebras of semisimple Lie algebras}.
Bull.Acad.Sci.URSS.Ser.Math.[Izvestia Akad.NAuk SSSR], \textbf{9}:291-300, In Russian,1945.
\bibitem{MT} Gunter Malle, D.M. Testerman, \textit{Linear algebraic groups and finite groups of Lie type}.
Cambridge Studies in Advanced Mathematics, Vol.\textbf{133}. Cambridge University Press, Cambridge, 2011.
\bibitem{Pan}Yang Pan, \textit{Saturation rank for finite group schemes and varieties of elementary subalgebras}.
CAU-Thesis.
\bibitem{Pre1}Alexander Premet, \textit{Nilpotent commuting varieties of reductive Lie algebras}.
Invent. math. (2003) \textbf{154}: 653.
\bibitem{Pre2}Alexander Premet, \textit{A Modular Analogue of Morozov's Theorem on Maximal subalgebras of simple Lie algebras}. Advances in Mathematics. Vol. 311, 833-884, 2017.
\bibitem{PS} Julia Pevtsova, Jim Stark, \textit{Varieties of elementary subalgebras of maximal dimension for modular Lie algebras}. arXiv:1503.01043v1.
\bibitem{Spr}T.A. Springer, \textit{Linear algebraic group}.
Progress in Mathematics, Vol.\textbf{9}. Birkhaeuser Boston, Inc.,Boston, MA,second edition,1998.
\bibitem{War}Jared Warner, \textit{Rational points and orbits on the variety of elementary subalgebras}.
Journal of Pure and Applied Algebra, \textbf{219}(2015), 3355 - 3371.
\end{thebibliography}
\end{document}